\newtheorem{lemma}{Lemma}
\newtheorem{claim}{Claim}
\newtheorem{problem}{Problem}
\newtheorem{conjecture}{Conjecture}
\newtheorem{theorem}[lemma]{Theorem}
\newtheorem{definition}[lemma]{Definition}
\newcommand{\kex}{{k\text{-\sc{ex}}}}
\newcommand{\ex}{{0\text{-\sc{ex}}}}
\newcommand{\nex}{{\text{\sc{ex}}}} 
\newcommand{\glex}{\succ_{\text{\sc{lex}}}}
\title{Rota's Basis Conjecture holds asymptotically}
\author{Alexey Pokrovskiy\thanks{Department of Mathematics, University College London, {\tt dr.alexey.pokrovskiy@gmail.com}.}}
\date{}
\begin{document}

\maketitle

\begin{abstract}
Rota's Basis Conjecture is a well known problem from matroid theory, that states that for any collection of $n$ bases in a rank $n$ matroid, it is possible to decompose all the elements into $n$ disjoint rainbow bases. 
Here an asymptotic version of this is proved. We show that it is possible to find $n-o(n)$ disjoint rainbow independent sets of size $n-o(n)$.
\end{abstract}
\section{Introduction}
In 1989, Rota made the following conjecture ``in any family $B_1, \dots, B_n$ of $n$ bases in  a vector space $V$, it is possible to find $n$ disjoint \emph{rainbow bases}'' (see \cite{huang1994relations}, Conjecture 4). Here a \emph{rainbow basis} means a basis of $V$ consisting of precisely one vector from each of $B_1, \dots, B_n$. In the context of this conjecture ``disjoint'' means that we do not have two rainbow bases using the same vector from the same basis $B_i$. Rota's conjecture has attracted attention due to its simplicity and connections to apparently unrelated areas. For example Huang and Rota \cite{huang1994relations} found connections between it and problems about Latin squares and supersymetric bracket algebra. Amongst other things, the recent collaborative Polymath project \cite{PolymathProposal} studied an approach to Rota's conjecture using topological tools.

It was observed (by Rota as well), that this might hold in the much more general setting of matroids rather than vector spaces. Matroids are an abstraction of independent sets in vector spaces, which also generalize many other ``independence structures''. They are defined on a set $V$ called the \emph{ground set} of the matroid. A matroid $M$ is a nonempty family of subsets of $V$ (called independent sets) which is closed under taking subsets and satisfies the following additional property (called the ``augmentation property''): that if $I, I'\in M$ are two independent sets with $|I|>|I'|$, then there is some element $x\in I\setminus I'$ such that $I'\cup \{x\}$ is also an independent set in $M$. A \emph{basis} of $M$ is a maximal independent sets. By the augmentation property all bases of $M$ must have the same size, which is called the \emph{rank} of $M$.
Using this terminology, the general Rota's Basis Conjecture (see \cite{huang1994relations}) can be phrased as:
\begin{conjecture}[Rota's Basis Conjecture]
Let $B_1, \dots, B_n$ be disjoint bases in a rank $n$ matroid $M$. Then it is possible to decompose $B_1\cup \dots\cup B_n$ into $n$ disjoint rainbow bases.
\end{conjecture}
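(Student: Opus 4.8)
The plan is to view the desired decomposition as an $n\times n$ array in which row $i$ must receive the $n$ elements of $B_i$, one per column, subject to each column being a basis of $M$; that is, we seek $n$ disjoint rainbow bases $C_1,\dots,C_n$. A partition of $B_1\cup\dots\cup B_n$ into $n$ bases always exists trivially --- take $C_i=B_i$ --- so the whole difficulty lies in \emph{repairing} a given basis-partition into a rainbow one. Accordingly I would fix a partition $C_1,\dots,C_n$ into bases, define its \emph{defect} to be $\sum_{i,j}\max\{0,\,|C_j\cap B_i|-1\}$ (which vanishes exactly when every column is rainbow), and aim to drive the defect to $0$ by local moves.

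The engine is the symmetric basis-exchange axiom. If a colour $i$ occurs at least twice in $C_j$ then, by double counting, some $C_k$ contains no element of colour $i$; picking $x\in C_j\cap B_i$, the exchange axiom yields $y\in C_k$ with $C_j-x+y$ and $C_k-y+x$ both bases, and if $y\notin B_i$ this single swap removes one unit of colour-$i$ surplus while only possibly disturbing the other colours. To stop such a swap from merely relocating the problem, the natural device is to \emph{chain} the moves: build an alternating walk in the basis-exchange graph on $\{C_1,\dots,C_n\}$ that carries the surplus element from column to column through a sequence of legal single-element swaps until it reaches a column missing its colour, and then perform all the swaps along the walk simultaneously. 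This is in the spirit of serial-exchange arguments for matroids and of augmenting-path proofs for common independent sets of two matroids. What one wants is a lemma of the shape: \emph{every non-rainbow basis-partition admits an alternating walk along which the simultaneous swap strictly decreases a suitable potential}; iterating it then terminates at $n$ disjoint rainbow bases.

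The step I expect to be the main obstacle is exactly that lemma --- the termination and connectivity of the exchange process. A swap that fixes one conflict typically creates others, there is no obvious monotone quantity, and matroid exchange sequences of this type are notorious for cycling; proving the existence of a good alternating walk seems to demand genuinely new structural information about how the colour classes of a single matroid interact across a basis-partition. In the representable case over a field of characteristic zero this connectivity is, morally, equivalent via the Alon--Tarsi connection of Huang and Rota and of Onn --- non-vanishing of a certain Latin-square sign count implies the conjecture for rank-$n$ vector spaces in characteristic zero --- but even that is established only for $n$ one more or one less than a prime (Drisko, Glynn), and for non-representable matroids or positive characteristic that algebraic route is unavailable altogether. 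A complete proof therefore appears to require the alternating-walk connectivity to be proved purely matroidally and uniformly in $n$; producing such a structural argument --- valid for every $n$ and every matroid, representable or not --- is the crux, and it is the part I expect to be genuinely hard.
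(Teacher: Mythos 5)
There is a genuine gap, and you have named it yourself: the entire content of your argument is delegated to the unproved lemma that ``every non-rainbow basis-partition admits an alternating walk along which the simultaneous swap strictly decreases a suitable potential.'' Everything before that point is routine --- the defect function, the observation that a doubly-covered colour in some $C_j$ forces a column missing that colour, and a single application of symmetric exchange --- and everything after it (iterate until defect zero) is trivial. But the existence of such a walk, together with a potential that cannot cycle, \emph{is} Rota's Basis Conjecture; no purely matroidal monotone quantity of this kind is known, and the exchange graph can indeed relocate conflicts indefinitely. Note also a small local issue: symmetric exchange gives you $y\in C_k$ with both $C_j-x+y$ and $C_k-y+x$ bases, but nothing forces $y\notin B_i$, so even a single step of your repair move is not guaranteed to reduce the colour-$i$ surplus without further argument.

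It is also worth being clear that the statement you are trying to prove is stated in this paper as a \emph{conjecture}; the paper does not prove it and does not attempt your route. What it proves is the asymptotic statement (Theorem~\ref{Theorem_main}): $n-o(n)$ disjoint rainbow independent sets of size $n-o(n)$. Its mechanism is quite different from an exchange-chain repair of a fixed basis-partition: it takes a family of rainbow independent sets that is maximal in a lexicographic order on colour-class sizes, repeatedly passes to $\ell$-reduced families (deleting elements that can be robustly re-inserted elsewhere), and plays the switching lemma (Lemma~\ref{Lemma_switching}) against the increment lemma (Lemma~\ref{Lemma_increment}) together with a bipartite-graph excess estimate (Lemmas~\ref{Lemma_0-excess} and~\ref{Lemma_bipartite}) to derive a contradiction if too many colours are underused. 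The lexicographic maximality and the reduction hierarchy are exactly the devices that substitute for the missing monotone potential in your sketch, and they only yield the asymptotic conclusion, not the exact one. So your proposal, as it stands, is an outline of the known difficulty rather than a proof; to make progress along your lines you would need to supply the termination/connectivity lemma, which remains open for general matroids.
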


Rota's conjecture attracted a lot of attention due to its simple formulation, and due to a large range of possible approaches towards it (coming from the many different settings in which matroids can naturally arise).  
One research direction is to prove the conjecture for some particular naturally-arising class of matroids. 
For example for matroids arising from real vector spaces (called real-representable matroids), the conjecture is known to hold whenever $n-1$ or $n+1$ is prime. This was proved in a combination of papers. First, Huang and Rota~\cite{huang1994relations} reduced the conjecture for real-representable matroids to the Alon-Tarsi Conjecture (which is a conjecture unrelated to matroids and states that for all $n$ the number of even and odd Latin squares of order $n$ is different). The Alon-Tarsi Conjecture was proved for $n-1$ prime by Drisko \cite{drisko1997number} and for $n+1$ prime by Glynn  \cite{glynn2010conjectures}.
Rota's Conjecture is known to hold for some other classes of matroids too. It was proved for paving matroids by Geelen and Humphries~\cite{geelen2006rota},  for strongly base orderable matroids by Wild~\cite{wild1994rota}, and for rank $\leq 4$ matroids computationally by Cheung~\cite{cheung2012computational}.

Another research direction   is to try and establish a weaker conclusion which holds for \emph{all matroids}. There are a number of natural approaches
  here:
\begin{enumerate}[(1)]
\item \textbf{Find many disjoint rainbow bases in $B_1\cup \dots\cup B_n$:} Finding one rainbow basis is easy using the augmentation property. Finding more is already challenging. Geelen and Webb~\cite{geelen2007rota} found $\Omega(\sqrt n)$ disjoint rainbow bases. This was improved to $\Omega(n/\log n)$ by Dong and Geelen~\cite{dong2019improved}, and further to $n/2-o(n)$ by Bucic, Kwan, Sudakov, and the author~\cite{bucic2018halfway}.
\item \textbf{Decompose $B_1\cup \dots\cup B_n$ into few rainbow independent sets:} The conjecture asks for a decomposition into $n$ independent sets.  Aharoni and Berger  showed that you can decompose into $2n$ independent sets. This was investigated further during the Polymath 12 project~\cite{PolymathPaper} where it was improved to $2n-2$.  
\item \textbf{Find $n$ disjoint rainbow independent sets of large total volume:} This means rainbow independent sets $I_1, \dots, I_n$ with $\sum_{i=1}^n |I_n|$ as large as possible. Rota's conjecture says that we can get $\sum_{i=1}^n |I_n|=n^2$. Both of the previous approaches give something here --- having $s$ rainbow bases clearly gives a family of independent sets of volume $sn$, whereas in a decomposition of $B_1\cup \dots\cup B_n$ into $t$ rainbow independent sets, the $n$ largest of these must have total volume at least $(n/t)n^2$. Thus the best known results about (1) and (2) both give a family of independent sets of volume around $n^2/2$. 
\end{enumerate}

In each of the above three approaches it is desirable to obtain an asymptotic version of the conjecture. In other words: Can you find $(1-o(1))n$ disjoint rainbow bases? Can you decompose $B_1\cup \dots\cup B_n$ into $(1+o(1))n$ disjoint rainbow independent sets? Can you find $n$ disjoint rainbow independent sets of  total volume $(1-o(1))n^2$? Previously such results were proved only for special classes of matroids --- Friedman and McGuinness~\cite{friedman2019girth} proved an asymptotic version for large girth matroids.  Combining the results of~\cite{huang1994relations, drisko1997number, glynn2010conjectures} with the Prime Number Theorem gives an asymptotic version for real-representable matroids~\cite{MattPrivate}. In this paper, we prove the first asymptotic version of the conjecture which holds \emph{for all matroids}.
\begin{theorem}\label{Theorem_main}
Let $B_1, \dots, B_n$ be disjoint bases in a rank $n$ matroid $M$. Then there are $n-o(n)$ disjoint rainbow independent sets in $B_1\cup \dots\cup B_n$  of size $n-o(n)$.
\end{theorem}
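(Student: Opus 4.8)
\section*{Proof strategy}

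The plan is to recast the statement in a form amenable to a semi-random construction, build all of the rainbow sets simultaneously, and then clean up the last few elements. First I would reformulate. A rainbow independent set of $B_1,\dots,B_n$ is exactly a set that is independent in $M$ and meets each $B_i$ in at most one element, i.e.\ a common independent set of $M$ and the partition matroid $N$ whose independent sets are the partial transversals of $\{B_1,\dots,B_n\}$. Moreover, finding $n-o(n)$ disjoint rainbow independent sets of size $n-o(n)$ is equivalent to finding $n$ disjoint rainbow independent sets $I_1,\dots,I_n$ with $\sum_c |I_c|=n^2-o(n^2)$: one implication is trivial, and for the other, if $\sum_c |I_c|\ge n^2-\varepsilon n^2$ then, as each $|I_c|\le n$, fewer than $\sqrt{\varepsilon}\,n$ of the sets can have size below $n-\sqrt{\varepsilon}\,n$. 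Thus it suffices to near-partition $V=B_1\cup\dots\cup B_n$ into $n$ common independent sets of $M$ and $N$; equivalently, to colour all but $o(n^2)$ elements of $V$ with colours $1,\dots,n$ so that each colour class is a partial transversal which is independent in $M$.

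To produce such a colouring I would run a R\"odl-nibble--style semi-random process: repeatedly take a small random batch of still-uncoloured elements and, for each $x\in B_i$ in the batch, tentatively assign it a uniformly random colour $c$ that is \emph{available} for $x$ --- meaning the current class $I_c$ contains no element of $B_i$ and $I_c\cup\{x\}$ is independent in $M$ --- keeping only the assignments that remain consistent within the batch. The colour classes stay rainbow and independent by construction, so the only thing to establish is that all but $o(n^2)$ elements eventually get coloured; this follows in the usual way, by tracking with concentration inequalities how the number of available colours of a typical uncoloured element evolves, provided that at every stage a typical uncoloured element has $\Omega(n)$ available colours.

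The heart of the proof, and the step I expect to be the main obstacle, is establishing exactly that availability estimate: that the matroid dependencies of the partially built colour classes cannot block too many colours for too many elements. The one piece of global structure to exploit is that $V$ is a disjoint union of $n$ independent sets of $M$, which forces every flat $F$ of $M$ to satisfy $|F|=\sum_i |F\cap B_i|\le n\cdot r_M(F)$, so a colour class of small size spans only few elements of each $B_i$ on average. However, a single low-rank flat which is nevertheless large can still lie in the span of many colour classes and thereby starve the elements inside it of available colours, and ruling this out is where I expect the genuinely new idea to be required. A natural approach is to split $V$ into a ``dense'' part, consisting of elements lying in low-rank high-density flats, and a ``sparse'' remainder on which the flat bound above is effective: handle the dense part by an argument tailored to matroids of bounded rank, perhaps via truncations or suitable minors (which is also morally where the rainbow-matching machinery behind the ``halfway'' theorem of~\cite{bucic2018halfway} could enter), and run the semi-random process on the sparse part.

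Finally, whatever $o(n^2)$ elements are left over still form $n$ disjoint independent sets of small total size, and I would absorb them into the near-bases already constructed using flexibility reserved in advance, in the style of absorption arguments for transversals in Latin squares, to push the total volume up to $n^2-o(n^2)$ and hence obtain $n-o(n)$ rainbow independent sets of size $n-o(n)$.
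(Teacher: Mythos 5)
Your reduction at the start is fine (total volume $n^2-o(n^2)$ does yield $n-o(n)$ rainbow independent sets of size $n-o(n)$ by the averaging argument you give), but the proof itself has a genuine gap at exactly the point you flag: the availability estimate for the semi-random process is never established, and nothing in the sketch makes it plausible that it can be, by the route you indicate. The constraint ``$I_c\cup\{x\}$ independent in $M$'' is a spanning condition, not a bounded-codegree condition, so the standard nibble bookkeeping does not apply: a colour class needs only $r_M(F)$ elements inside a flat $F$ to block every element of $F$ for that colour. Your global bound $|F|\le n\cdot r_M(F)$ is exactly tight for the dangerous flats (e.g.\ $F$ of rank $r$ containing $r$ elements of every $B_i$), so there is no slack --- the process must distribute the elements of each such flat essentially perfectly equitably among the $n$ classes, and any $\Theta(1)$-fraction inefficiency of the kind a nibble inevitably incurs leaves $\Theta(|F|)$ elements with $o(n)$ available colours. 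The proposed remedy (split into a ``dense'' part lying in low-rank high-density flats, handle it by truncations/minors or the machinery of~\cite{bucic2018halfway}) is stated as a hope rather than an argument: flats of all ranks $1\le r\le n$ can be simultaneously tight, they need not be disjoint or nested, and no lemma is offered that controls how the partially built classes intersect them. Since you yourself locate ``the genuinely new idea'' here, the proposal is a programme, not a proof. The unneeded final absorption step (the statement only asks for $n-o(n)$ sets, which your volume reduction already gives) is likewise unsubstantiated but harmless.

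For comparison, the paper avoids randomness entirely. It takes a family of $(1-\epsilon)n$ disjoint rainbow independent sets that is maximum in a lexicographic sense (maximizing the sizes of the smallest colour classes), and studies the operation of \emph{reducing} a family by deleting all elements that can be robustly re-accommodated in many other sets. Two deterministic lemmas drive the argument: a switching lemma (Lemma~\ref{Lemma_switching}), which is a bounded-depth augmenting argument showing that if an outside element extends a set of the $r$-fold reduced family then the whole family can be improved while touching only $O(2^r)$ elements of prescribed (large) colours; and an increment lemma (Lemma~\ref{Lemma_increment}), which, via an auxiliary bipartite ``availability graph'' and a K\"onig-type excess bound (Lemma~\ref{Lemma_0-excess}), shows the reduced families must shed $\Omega(\epsilon^2n^2)$ elements per round unless such an extending outside element exists. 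Iterating for $O(\epsilon^{-2})$ rounds forces the extension to exist, contradicting lexicographic maximality. In particular, the issue you identify as the crux (spans of partial classes starving elements of available colours) is precisely what the reduction/lexicographic-maximum device is designed to circumvent, and it is handled by local switching rather than by any equidistribution statement of the kind your nibble would need.
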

Notice that the union of these independent sets has size $(1-o(1))n^2$, and so this theorem gives an asymptotic version of the conjecture, when one takes approach (3) above. Going forward, it would be interesting to obtain stronger asymptotic versions of the conjecture as well as a proof for large rank matroids. Theorem~\ref{Theorem_main} is likely to be a good starting point in proving such results --- in recent years ``absorption techniques'' have been used in related problems to turn asymptotic solutions like Theorem~\ref{Theorem_main} into exact ones.
 
\section{Proof outline}
Here, we explain the ideas of our proof  by presenting a simplified version of it with some complications missing. Aside from some definitions, everything here is not used in the actual proof.

In this paper, we use the term \emph{``coloured matroids''} to mean a matroid with a colour assigned to each element in the ground set such that the colour classes are independent. 
 We will work with families  $\mathcal T=\{T_1, \dots, T_m\}$   of disjoint rainbow independent sets in a coloured matroid $M$. We use $E(\mathcal T)$ for the subset $T_1\cup \dots \cup T_m$ of the ground set of $M$. For two families  $\mathcal T=\{T_1, \dots, T_m\}$, $\mathcal S=\{S_1, \dots, S_m\}$, we say that $\mathcal T$ is a subfamily of $\mathcal S$, denoted  $\mathcal T\subseteq \mathcal S$, if $T_i\subseteq  S_i$ for all $i$.
 For a colour, we use $E_{\mathcal T}(c)$ to denote the set of colour $c$ elements of $\mathcal T$ and fix $e_{\mathcal T}(c)=|E_{\mathcal T}(c)|$. For a coloured matroid  $M$, use $C(M)$ for the set of colours occurring on $M$.

We call  a family $\mathcal T$  \emph{maximum} if $|E(\mathcal T)|$ is maximum amongst families of disjoint rainbow independent sets in $M$.
Rota's Conjecture is equivalent to saying that a maximum family has  $n^2$ elements. 
Theorem~\ref{Theorem_main} is equivalent to proving that a maximum family has $\geq (1-\epsilon)n^2$ elements. We achieve this by studying how elements can be moved between the rainbow independent sets of the family. The key definition is that of a ``reduced family of $\mathcal T$'' which informally means deleting all elements from $\mathcal T$ which can be moved around robustly.  
\begin{definition}[Reduced family]
Let $\mathcal{T}$ be a family of disjoint rainbow independent sets in a coloured matroid.
  Define the $\ell$-\emph{reduced family} $\mathcal T_{\ell}'$ of $\mathcal T$ to be $\mathcal T$ minus all elements $e\in E(\mathcal T)$ for which there are at least $\ell$ different choices of $T_j\in \mathcal T$ with $T_j+e$ a rainbow independent set. Define $\mathcal T_{\ell}^{(r)}={{\mathcal T_{\ell}''''}^{\dots}}$, where we repeat the operation $r$ times. We fix $\mathcal T_{\ell}^{(0)}=\mathcal T$.
\end{definition}
A maximum family of rainbow independent sets $\mathcal T$ has the property that no element $e$ outside $\mathcal T$ can be added to any rainbow independent set $T\in \mathcal T$ without breaking either rainbowness or independence. Our proof rests on this property being preserved by reduction.
\begin{lemma}\label{Lemma_sketch_switching}
Fix $r=0$ or $1$.
Let $M$ be a coloured matroid and $\mathcal T$ a family of disjoint rainbow independent sets in $M$. Suppose we have $T\in \mathcal T^{(r)}_3$ and $e\not\in \mathcal T$ with $T+e$ rainbow and independent. Then $\mathcal T$ is not maximum.
\end{lemma}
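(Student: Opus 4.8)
The plan is to reduce everything to the elementary observation that if $S$ is a member of a family $\mathcal S$ of disjoint rainbow independent sets and $g\notin E(\mathcal S)$ with $S+g$ rainbow and independent, then replacing $S$ by $S+g$ yields a family with one more element than $\mathcal S$, so $\mathcal S$ is not maximum. For $r=0$ this is literally the assertion, since $\mathcal T^{(0)}_3=\mathcal T$. So assume $r=1$; write $\mathcal T=\{T_1,\dots,T_m\}$, let $T_i'\subseteq T_i$ be the corresponding member of the reduced family $\mathcal T_3'$, and suppose $T=T_i'$ with $T_i'+e$ rainbow and independent. The strategy is to delete at most two elements of $T_i\setminus T_i'$, relocate them to other members of $\mathcal T$ (these moves preserve $|E(\cdot)|$), and then insert $e$ into what remains of $T_i$ (this move increases $|E(\cdot)|$).

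First I would locate the at most two ``obstructing'' elements. Since $T_i$ is rainbow there is at most one $f_0\in T_i$ having the colour of $e$, and since $T_i'+e$ is rainbow any such $f_0$ lies in $T_i\setminus T_i'$; set it aside if it exists. Next, $T_i'+e$ being independent gives $e\notin\overline{T_i'}$. If $e\notin\overline{T_i\setminus\{f_0\}}$, then $(T_i\setminus\{f_0\})+e$ is already independent (and rainbow, as we removed the only colour of $e$ present in $T_i$), and nothing more is needed. Otherwise $e\in\overline{T_i\setminus\{f_0\}}$; since $T_i\setminus\{f_0\}$ is independent, the set $(T_i\setminus\{f_0\})+e$ has a \emph{unique} circuit $C$, it contains $e$, and because $e\notin\overline{T_i'}$ it also contains some $f_1\in(T_i\setminus\{f_0\})\setminus T_i'$. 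Deleting $f_1$ destroys the only circuit, so $(T_i\setminus\{f_0,f_1\})+e$ is independent, and it is rainbow for the same reason as before. (If there is no colour conflict, read $T_i$ for $T_i\setminus\{f_0\}$ throughout; if neither obstruction occurs, we are already in the base case.)

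It remains to relocate $f_0$ and $f_1$, and this is where the hypothesis $\ell=3$ enters. Each of $f_0,f_1$ lies in $T_i\setminus T_i'$, so by definition of $\mathcal T_3'$ there are at least three indices $j$ with $T_j+f_0$ (respectively $T_j+f_1$) rainbow and independent; one of these is $j=i$, so each of $f_0,f_1$ has at least two admissible destinations other than $i$. Choose a destination $j\ne i$ for $f_0$; since $f_1$ has at least two admissible destinations distinct from $i$, one of them, call it $k$, differs from $j$. Now form $\mathcal S$ by setting $S_i=(T_i\setminus\{f_0,f_1\})+e$, $S_j=T_j+f_0$, $S_k=T_k+f_1$, and $S_t=T_t$ for all other $t$. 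Each member is rainbow and independent (for $S_j,S_k$ by the defining property of the reduced family, for $S_i$ by the previous paragraph), and the family is disjoint because $f_0$ has been moved out of $T_i$ into $S_j$ only, $f_1$ out of $T_i$ into $S_k$ only, and $e\notin E(\mathcal T)$. Since $E(\mathcal S)=E(\mathcal T)\cup\{e\}$, the family $\mathcal T$ is not maximum.

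The only step needing genuine care is the uniqueness of the circuit $C$: this is the standard fact that adjoining one element to an independent set creates at most one circuit, applied to $(T_i\setminus\{f_0\})+e$. The bookkeeping at the end — ensuring the two evicted elements land in different members of $\mathcal T$ — is exactly what forces $\ell=3$ rather than $\ell=2$: with only one spare destination apiece, $f_0$ and $f_1$ could be obliged to move to the same set, where there is no guarantee that adding both keeps it rainbow and independent.
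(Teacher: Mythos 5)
Your proof is correct and takes essentially the same route as the paper: remove the (at most two) elements of $T_i\setminus T_i'$ that block rainbowness or independence, redistribute them to two distinct other members of $\mathcal T$ using the definition of the $3$-reduced family, and insert $e$ to obtain a strictly larger family. The only cosmetic difference is that you locate the second blocking element via the unique-circuit/closure fact instead of the augmentation property, which is an equivalent standard tool.
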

\begin{proof}
Suppose that $T+e$ rainbow and independent for some $T\in \mathcal T'_3$.
Let $R\in \mathcal T$ with $T\subseteq R$. Since $T+e$ is rainbow and independent, there are two (or possibly less) elements $f_1, f_2\in R\setminus T$ with $R-f_1-f_2+e$ rainbow and independent (choose $f_1$ with $c(f_1)=c(e)$ and use the augmentation property to get $f_2$ with $R-f_2+e$ independent). Since $f_1, f_2\not\in E(\mathcal T'_3)$, there are $Q_1, Q_2\in \mathcal T$ with $Q_1+f_1$ and $Q_2+f_2$ rainbow and independent. In fact from the definition of $3$-reduced, there are at least $3$ choices for $Q_1, Q_2$. This allows us to choose them to be distinct from each other and $R$. Now in $\mathcal T$ replace $R$ by $R-f_1-f_2+e$, $Q_1$ by $Q_1+f_1$, and $Q_2$ by $Q_2+f_2$. This gives a larger family.  
\end{proof}

The above lemma generalizes to arbitrary $r$ in Lemma~\ref{Lemma_switching} (by increasing ``$3$'' to something bigger). Our proof consists of showing that for any family $\mathcal T$ with $e(\mathcal T)\leq (1-\epsilon)n^2$, its reduction $\mathcal T^{(r)}_{\ell}$ becomes small for some $r$. 
 We do this step by step by showing the inequality ``$e(\mathcal T'_{\ell})\leq e(\mathcal T)-\epsilon^2 n^2$'' for such families $T$. By iterating this inequality we get $e(\mathcal T^{(r)}_{\ell})\leq e(\mathcal T)-r\epsilon^2 n^2$. 
 The proof of this inequality rests on the following lemma which estimates how many edges every colour loses when reducing the family. 
\begin{lemma}\label{Lemma_sketch_increment}
Fix $r=0$ or $1$.
Let $M$ be a coloured matroid  with $n$ colours of size $n$ and let $\mathcal T$ be a maximum family of $m$ disjoint rainbow independent sets in $M$. Let  $T\in \mathcal T^{(r)}_3$ and let $c$ be a colour missing from $T$. Then
$$e_{\mathcal T^{(r+1)}_1}(c)\leq  |T|-n+m$$
\end{lemma}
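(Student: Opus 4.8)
The plan is to confine all the colour-$c$ elements that matter to one small set: the intersection of the colour-$c$ class with the span of $T$. Write $C_c$ for the set of colour-$c$ elements, so that $|C_c| = n$, and let $\overline{T}$ be the closure (span) of $T$ in $M$; since $T$ is independent, $\overline{T}$ has rank $|T|$. I would first establish two containments inside $\overline{T}$: (i) every $e \in C_c \setminus E(\mathcal{T})$ lies in $\overline{T}$, and (ii) every $e \in E_{\mathcal{T}^{(r+1)}_1}(c)$ lies in $\overline{T}$. Granting these, the sets $C_c \setminus E(\mathcal{T})$ and $E_{\mathcal{T}^{(r+1)}_1}(c)$ are disjoint (one lies in $E(\mathcal{T})$ and the other does not) and both are contained in $C_c \cap \overline{T}$, so the bound will drop out by counting.

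For (i): if some $e \in C_c \setminus E(\mathcal{T})$ were not in $\overline{T}$, then $T + e$ would be independent, and it is also rainbow because $c$ is a colour missing from $T$; since $T \in \mathcal{T}^{(r)}_3$ and $e \notin \mathcal{T}$, Lemma~\ref{Lemma_sketch_switching} would then contradict the maximality of $\mathcal{T}$ (for $r = 0$ this is just the observation that replacing $T$ by $T + e$ enlarges the family). For (ii): since $\mathcal{T}^{(r+1)}_1$ is obtained from $\mathcal{T}^{(r)}_3$ by one further $1$-reduction and $T \in \mathcal{T}^{(r)}_3$, any $e \in E_{\mathcal{T}^{(r+1)}_1}(c)$ was not removed by that reduction, so in particular $T + e$ is not a rainbow independent set; as it is rainbow, it must be dependent, that is, $e \in \overline{T}$. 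In both cases $e \notin T$, because $c$ is missing from $T$.

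It remains to count. Since $C_c$ is independent, $C_c \cap \overline{T}$ is an independent set contained in $\overline{T}$, which has rank $|T|$; hence $|C_c \cap \overline{T}| \le |T|$. Combining this with (i), (ii), and $|C_c \setminus E(\mathcal{T})| = n - e_{\mathcal{T}}(c)$ gives
\[ \bigl( n - e_{\mathcal{T}}(c) \bigr) + e_{\mathcal{T}^{(r+1)}_1}(c) \;\le\; |C_c \cap \overline{T}| \;\le\; |T| . \]
Finally, each of the $m$ rainbow independent sets comprising $\mathcal{T}$ contains at most one colour-$c$ element, so $e_{\mathcal{T}}(c) \le m$; substituting and rearranging yields $e_{\mathcal{T}^{(r+1)}_1}(c) \le |T| - n + e_{\mathcal{T}}(c) \le |T| - n + m$, as claimed.

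The only step needing genuine care is (i) when $r = 1$: there $T$ lives in the reduced family rather than in $\mathcal{T}$ itself, so one cannot appeal to maximality of $\mathcal{T}$ directly, and this is exactly the purpose of Lemma~\ref{Lemma_sketch_switching}; with it in hand the argument runs uniformly for $r \in \{0,1\}$. Everything else is routine — the rank estimate $|C_c \cap \overline{T}| \le |T|$ uses only that colour classes are independent, the inequality $e_{\mathcal{T}}(c) \le m$ uses only rainbowness, and the description of the reduction-survivors is immediate from the definition of the reduced family — so I do not anticipate a real obstacle.
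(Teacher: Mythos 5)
Your proposal is correct and takes essentially the same route as the paper: the rank bound $|C_c\cap\overline{T}|\le |T|$ is the paper's augmentation-property count of colour-$c$ elements addable to $T$, Lemma~\ref{Lemma_sketch_switching} plus maximality forces all such elements into $E(\mathcal T)$, and the definition of the $1$-reduction ensures they do not survive into $\mathcal T^{(r+1)}_1$ — you simply phrase the count dually (bounding the colour-$c$ elements trapped in the span of $T$) instead of counting the elements absent from $\mathcal T^{(r+1)}_1$. Your intermediate inequality $e_{\mathcal T^{(r+1)}_1}(c)\le |T|-n+e_{\mathcal T}(c)$ even recovers the refined form the paper mentions right after the lemma.
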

\begin{proof}
By the augmentation property there are $n-|T|$ colour $c$ elements $e$ with $T+e$ a rainbow independent set.  Lemma~\ref{Lemma_sketch_switching} tells us that these all occur on $\mathcal T$.
 By definition of $\mathcal T'_1$, they are all absent from $\mathcal T^{(r+1)}_1$. Additionally there are at least $n-m$ colour $c$ elements absent from $\mathcal T$, which remain absent in $\mathcal T^{(r+1)}_1$.
\end{proof}
In the actual proof use a version of this $\mathcal  T_{\ell}^{(r)}$ with larger $r, \ell$ (Lemma~\ref{Lemma_increment}).
The above lemma gives its best bound when $T$ is as small as possible. This motivates us to define the  \emph{excess of a colour $c$ in $\mathcal T$}:
$$\nex(c, \mathcal T):= \max(0,e_{\mathcal T}(c)+n-m-\min_{T\in \mathcal T: c\not\in T} |T|)$$ 
Lemma~\ref{Lemma_sketch_increment} now can be rephrased as saying that $e_{\mathcal T^{(r+1)}_1}(c)\leq e_{\mathcal T^{(r)}_3}(c)-\nex(c, \mathcal T^{(r)}_3)$ for every colour missing from some $T\in \mathcal T$. To get an inequality like ``$e(\mathcal T^{(r+1)})\leq e(\mathcal T^{(r)})-\epsilon^2 n^2$'' from this, we need to show that the average excess over all colours is $\epsilon^2 n$. We show the following:
\begin{lemma}\label{Lemma_sketch_excess}
Let $M$ be a coloured matroid  with $n$ colours of size $n$ and let $\mathcal T$ be a family of $(1-\epsilon)n$ disjoint rainbow independent sets in $M$.  
Then 
$$\frac{1}n\sum_{\text{colours } c}\nex(c, \mathcal T) \geq \epsilon^2 n$$
\end{lemma}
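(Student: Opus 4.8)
The plan is to reduce the claimed bound to a purely combinatorial estimate — it involves only the sizes $|T_i|$ and the incidences between colours and the sets of $\mathcal{T}$, so no matroid structure enters directly — and then to analyse that estimate by a convexity argument together with a short case distinction on the set sizes. Write $m=(1-\epsilon)n$ throughout, so that $n-m=\epsilon n$. The first step is a per-colour lower bound. Since the $T_i$ are pairwise disjoint rainbow sets, exactly $e_{\mathcal{T}}(c)$ of them meet colour $c$, so $m-e_{\mathcal{T}}(c)$ of them avoid it; ordering the sets by size as $|T_{(1)}|\le\dots\le|T_{(m)}|$, the smallest set avoiding $c$ then has size at most $|T_{(e_{\mathcal{T}}(c)+1)}|$ (in the worst case the sets through $c$ are the smallest ones). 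Hence
\[
\nex(c,\mathcal{T})\ \ge\ \max\bigl(0,\ e_{\mathcal{T}}(c)+\epsilon n-|T_{(e_{\mathcal{T}}(c)+1)}|\bigr),
\]
with the colours contained in every $T_i$ treated as a boundary case.

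I would then sum this over all colours. Because $x\mapsto\max(0,x)$ is convex, Jensen's inequality applied to a suitable block of colours — for instance to the $n-|T_*|$ colours missing a fixed smallest set $T_*$, all of which have smallest avoiding set of size $\le|T_*|$ — together with the bookkeeping identity $\sum_c e_{\mathcal{T}}(c)=|E(\mathcal{T})|=\sum_i|T_i|$ and the bound $e_{\mathcal{T}}(c)\le m$ (used to control $\sum_{c\in T_*}e_{\mathcal{T}}(c)\le m|T_*|$), collapses everything to a one-variable estimate of the rough shape
\[
\sum_c\nex(c,\mathcal{T})\ \ge\ \Bigl(|E(\mathcal{T})|+\epsilon n^{2}+|T_*|^{2}-2n\,|T_*|\Bigr)^{+}.
\]
Keeping more of the size profile $(|T_{(k)}|)_k$ and the count profile $(e_{\mathcal{T}}(c))_c$ through the convexity step yields sharper variants of this bound.

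The argument would then finish by a case split on the set sizes. When the sets are small on average, almost all colours are underused yet each still has a small set avoiding it, so roughly $n$ colours each contribute about $\epsilon n$ and the sum is comfortably above $\epsilon^{2}n^{2}$. When the sets are nearly full, the total deficiency $\sum_i(n-|T_i|)$ is small, which forces many colours to be used close to $m$ times; here one needs extra input — e.g. from the augmentation property of the matroid, since a colour missing from a small set necessarily has many addable elements and is therefore heavily used — to see that these colours carry a large excess. The delicate, and I expect decisive, regime is the balanced one, all $|T_i|\approx m=(1-\epsilon)n$ with $e_{\mathcal{T}}(c)\approx m^{2}/n=(1-\epsilon)^{2}n$: there $\min_{T\not\ni c}|T|\approx m$ and $\nex(c,\mathcal{T})\approx(1-\epsilon)^{2}n+\epsilon n-(1-\epsilon)n=\epsilon^{2}n$, so the desired inequality $\sum_c\nex(c,\mathcal{T})\ge\epsilon^{2}n^{2}$ holds with equality. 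This exact tightness is the main obstacle: since there is no slack at the extremal configuration, any single global average is too lossy, and one must track the size and count profiles simultaneously (using $e_{\mathcal{T}}(c)\le m$ at every colour) to certify that every deformation away from the balanced, size-$m$ configuration can only increase the left-hand side.
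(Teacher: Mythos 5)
Your opening reduction is fine: since the sets of $\mathcal T$ are disjoint and rainbow, exactly $e_{\mathcal T}(c)$ of them contain colour $c$, so $\min_{T\in\mathcal T:\,c\notin T}|T|\le |T_{(e_{\mathcal T}(c)+1)}|$, and your per-colour bound together with the displayed one-variable estimate (taking $T_*$ a smallest set and using $\sum_{c\in C(T_*)}e_{\mathcal T}(c)\le m|T_*|$) is correct. But everything after that is a plan rather than a proof: as you note yourself, that estimate is far too weak for $|T_*|$ between $\epsilon n$ and $n$, the inequality is tight in the balanced regime, and the decisive step (``track the size and count profiles simultaneously \dots to certify'') is never carried out. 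Moreover it cannot be carried out from the ingredients you list. Take $n=100$, $\epsilon=0.1$, and (in the rank-$100$ uniform matroid with $100$ colour classes of size $100$) ninety disjoint rainbow independent sets, each of size $95$, in which $15$ colours appear $89$ times and the remaining colours $85$ or $84$ times (such incidences exist by Gale--Ryser). Every colour is then missing from some set of size $95$, so $\nex(c,\mathcal T)=\max(0,e_{\mathcal T}(c)-85)$ and the total excess is $15\cdot 4=60<\epsilon^2n^2=100$. So once sets may be larger than $(1-\epsilon)n$ while some colours are used $m-1$ times, no convexity analysis of the two profiles will certify the stated bound; some control on how often each colour can be missed must enter. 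Your proposed rescue in the ``nearly full'' regime via the augmentation property is also a non sequitur: $\mathcal T$ is not assumed maximum in this lemma, so a colour missing from a small set has many \emph{addable} elements, but nothing forces those elements to lie in $E(\mathcal T)$, hence no lower bound on $e_{\mathcal T}(c)$ and no large excess follows.

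The paper's route is entirely different and is built around exactly this issue. It encodes the data in the availability graph $A(\mathcal T)$ (sets versus colours, with an edge when the colour is missing from the set), observes $\nex(c,\mathcal T)=\ex_{A(\mathcal T)}(c)$ for colours missed by some set, and proves the purely graph-theoretic Lemma~\ref{Lemma_0-excess} via K\"onig's theorem: $\sum_{y\in Y}\ex_G(y)\ge\delta(Y)(|Y|-|X|)$. The factor $\delta(Y)$ is the crux: the bound delivers $\epsilon^2n^2$ only when every relevant colour is missed by at least $\epsilon n$ sets, and in the actual proof of Theorem~\ref{Theorem_main} this degree condition is manufactured by the small/medium/large colour split and the dummy elements before Lemma~\ref{Lemma_bipartite} is applied. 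In short, you have rediscovered the elementary per-colour reformulation, but the heart of the argument --- a matching/cover (K\"onig) step plus the preparation that makes its minimum-degree hypothesis available --- is missing from your proposal, and without some such extra input the step you defer is not merely delicate but unattainable.
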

The proof of this has nothing to do with matroids or colours. The essence of it turns out to be a very short lemma about bipartite graphs (see Lemma~\ref{Lemma_0-excess}).
We now have all the ingredients that go in the Theorem~\ref{Theorem_main}. To summarize, the structure is:
\begin{itemize}
\item Start with  $M$, a coloured matroid  with $n$ colours of size $n$.
\item Consider a maximum family $\mathcal T$ of $(1-\epsilon)n$ disjoint rainbow independent sets in $M$.
\item Suppose for contradiction that $e(\mathcal T)\leq (1-\epsilon )n^2$.
\item By a variant of Lemma~\ref{Lemma_sketch_excess}, we have $\sum_{\text{colours } c}\nex(c, \mathcal T^{(r)}_{\ell}) \geq \epsilon^2 n^2$ for all $r$ and large enough $\ell$.
\item By a variant of Lemma~\ref{Lemma_sketch_increment}, and maximality, we have $e(\mathcal T^{(r)}_{\ell})< (1-r\epsilon^2)n^2$ for  all $r$ and large enough $\ell$. At $r=1/\epsilon^2$ this is a contradiction (meaning that the assumption  ``$\mathcal T$ is maximum'' in one of the applications of Lemma~\ref{Lemma_sketch_increment} along the way was invalid).
\end{itemize}
In this sketch, there are a couple of things missing. Most of them are easy to fill in --- namely  Lemmas~\ref{Lemma_sketch_switching} and~\ref{Lemma_sketch_increment} can be proved for larger $r$ and $\ell$. 

However there is one complication which appears to require significant changes to the above strategy --- namely the requirement that ``$c$ is missing from some $T\in \mathcal T$''   for the inequality  ``$e_{\mathcal T^{(r+1)}_1}(c)\leq e_{\mathcal T^{(r)}_3}(c)-\nex(c, \mathcal T^{(r)}_3)$'. When there are many colours that occur on all $T\in \mathcal T$, then it is possible that $\mathcal T'_1= \mathcal T$, which breaks the above strategy (as an example, consider a family $\mathcal T$ consisting of $(1-\epsilon)n$ rainbow independent sets of size $n/2$ all using the same $n/2$ colours and nothing else).  
The way we get around this issue is to change what ``maximum family'' means. Rather than asking them to have as many elements as possible, we instead ask them to be ``lexicographically maximum'' which means roughly that $\min_{\text{colours } c}e_{\mathcal T}(c)$ is as large as possible. The overall structure of the proof remains unchanged --- it follows analogues of the above lemmas with suitable changes.

\section{Proof of Theorem~\ref{Theorem_main}}
Rather than working with the excess of a family as in the proof outline, we will associate an auxiliary bipartite graph  to every family and study a parameter $\kex_G(y)$ associated to the graph.
\begin{definition}
Let $y$ be a vertex in a graph $G$. Let $N(y)=\{x_1, \dots, x_{d(y)}\}$ be ordered with $d(x_1)\geq d(x_2)\geq \dots\geq d(x_{d(y)})$. Define the $k$-excess of $y$ in $G$
$$ \kex_G(y)=\max(0, d(x_k)-d(y)).$$
\end{definition}
If a vertex $y$ has less than $k$ neighbours, then this definition says $\kex_G(y)=0$.
The following  lemma will imply Lemma~\ref{Lemma_sketch_excess}.
\begin{lemma}[$0$-excess sum]\label{Lemma_0-excess}
Let $G$ be a bipartite graph with parts $X,Y$. 
Let $\delta(Y)$ denote  the smallest degree in $G$ out of vertices of $Y$. Then 
$$\sum_{y\in Y}\ex_G(y)\geq \delta(Y)(|Y|-|X|)$$
\end{lemma}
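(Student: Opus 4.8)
The plan is to prove Lemma~\ref{Lemma_0-excess} by induction on $|X|$, splitting according to whether $G$ has a matching saturating $X$. First dispose of the trivial reductions: if $|Y|\leq|X|$ the right-hand side is non-positive while the left-hand side is non-negative, and if $\delta(Y)=0$ the right-hand side is again non-positive; so I may assume $|Y|>|X|$ and $\delta:=\delta(Y)\geq 1$. Then every $y\in Y$ has a neighbour, so $\Delta_y:=\max_{x\in N(y)}d(x)$ is defined and $\ex_G(y)=\max(0,\Delta_y-d(y))$; equivalently $\ex_G(y)+d(y)=\max(d(y),\Delta_y)\geq\max(\delta,\Delta_y)$. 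The base case $|X|=0$ is trivial, since then $\delta(Y)=0$.

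Suppose first that $G$ has a matching $\mu$ saturating $X$. Then $|Y\setminus\mu(X)|=|Y|-|X|$, and since $\mu(x)$ is a neighbour of $x$ we get $\ex_G(\mu(x))+d(\mu(x))\geq\Delta_{\mu(x)}\geq d(x)$ for each $x\in X$, while $\ex_G(y)+d(y)\geq\delta$ for every $y\in Y$. Summing over the partition $Y=\mu(X)\sqcup(Y\setminus\mu(X))$ and using $\sum_{y\in Y}d(y)=e(G)=\sum_{x\in X}d(x)$ yields $\sum_{y\in Y}\ex_G(y)+e(G)\geq e(G)+\delta(|Y|-|X|)$, which is the claim.

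Now suppose $G$ has no matching saturating $X$. By Hall's theorem there is $S\subseteq X$ with $|N(S)|<|S|$; put $T=N(S)$, $S'=X\setminus S$, $T'=Y\setminus T$, and $H=G[S'\cup T']$. Every vertex of $T'$ has all its $G$-neighbours in $S'$ (otherwise it would lie in $N(S)=T$), so $d_H(y)=d_G(y)$ for $y\in T'$; hence $\delta_H(T')\geq\delta$, and, since $H$ is an induced subgraph of $G$ (so $\Delta^H_y\leq\Delta^G_y$), also $\ex_G(y)\geq\ex_H(y)$ for every $y\in T'$. Moreover $|S'|=|X|-|S|<|X|$ because $|S|\geq 1$, and $|T'|-|S'|=(|Y|-|X|)+(|S|-|T|)\geq(|Y|-|X|)+1>0$. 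Applying the induction hypothesis to $H$ (with $S'$ in the role of $X$ and $T'$ in the role of $Y$) now gives
$\sum_{y\in Y}\ex_G(y)\geq\sum_{y\in T'}\ex_G(y)\geq\sum_{y\in T'}\ex_H(y)\geq\delta_H(T')\cdot(|T'|-|S'|)\geq\delta(|Y|-|X|)$.

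The argument is short, so there is no single hard step; the care is all in the deficiency case. One should check that $T'\neq\emptyset$ — which holds because $|Y|>|X|\geq|S|>|N(S)|$, so $|Y|>|N(S)|$ — so that $\delta_H(T')$ is meaningful; that passing to the induced subgraph $H$ leaves the $T'$-side degrees unchanged while only decreasing $S'$-side degrees, which is what makes both $\delta_H(T')\geq\delta$ and $\ex_G(y)\geq\ex_H(y)$ on $T'$ go through; and that the induction parameter $|X|$ strictly decreases because $S\neq\emptyset$. The ``saturating matching versus Hall deficiency'' dichotomy is the whole idea, and it delivers the bound with slack to spare in both cases.
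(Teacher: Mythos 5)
Your proof is correct, but it takes a genuinely different route from the paper. The paper's argument is a one-shot counting argument built on K\"onig's theorem: it fixes a maximum matching $M$ and a minimum vertex cover $C$, pairs degrees across the matching edges between $X\cap C$ and $Y\setminus C$ (using that $N(Y\setminus C)\subseteq C\cap X$), and lower-bounds the leftover edge count by $\delta(Y)|Y\setminus M|\geq\delta(Y)(|Y|-|X|)$. You instead induct on $|X|$ with a Hall dichotomy: when a matching saturating $X$ exists, your pairing $\ex_G(\mu(x))+d(\mu(x))\geq d(x)$ together with the handshake identity $\sum_{x\in X}d(x)=\sum_{y\in Y}d(y)$ gives the bound immediately (this is essentially the same degree-pairing idea as in the paper, but without needing the cover); when Hall fails, you discard a deficient set $S$ and its neighbourhood and recurse, and your checks are the right ones --- $T'\neq\emptyset$, the $T'$-side degrees are preserved in the induced subgraph so $\delta_H(T')\geq\delta(Y)$ and $\ex_G(y)\geq\ex_H(y)$ on $T'$, the parameter $|X|$ strictly drops since $S\neq\emptyset$, and the deficiency $|T'|-|S'|$ only improves on $|Y|-|X|$. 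Since K\"onig and Hall are equivalent, the two proofs use the same matching-theoretic core; yours trades the paper's compact (and notationally terse) max-matching/min-cover computation for an induction, and is arguably more transparent, while the paper's is shorter and avoids any recursion. Both yield the inequality with room to spare, so either serves the role the lemma plays in Lemma~\ref{Lemma_bipartite}.
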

\begin{proof} Let $M$ be a maximum matching in $G$ and $C$ a minimum vertex cover. By K\"onig's Theorem we have $e(M)=|C|$ and so each edge of $M$ contains precisely one vertex of $C$. 
In particular $M\cap Y\setminus C$ is matched to $M\cap X\cap C$.
Since $C$ is a vertex cover, we have $N(Y\setminus C)\subseteq C\cap M\cap X$. This gives
\begin{align*}
\sum_{y\in M\cap Y\setminus C}\ex_G(y)
&\geq \sum_{x\in M\cap X\cap C}d(x)- \sum_{y\in M\cap Y\setminus C}d(y)
\geq e(M\cap X\cap C,Y\setminus C)-e(M\cap Y\setminus C, X)\\
&= e(Y\setminus M, X)
\geq \delta(Y)|Y\setminus M|\geq \delta(Y)(|Y|-|X|).
\end{align*}
\end{proof}

\begin{lemma}[$k$-excess sum]\label{Lemma_bipartite}
Let $G$ be a bipartite graph with parts $X,Y$. For a subset $Y'\subseteq Y$, let $\delta(Y')$ be the smallest degree in $G$ out of vertices of $Y'$. 
Then 
$$\sum_{y\in Y'}\kex_G(y)\geq \delta(Y')(|Y'|-|X|)-2k|Y'|$$
\end{lemma}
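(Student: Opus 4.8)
The plan is to deduce Lemma~\ref{Lemma_bipartite} from Lemma~\ref{Lemma_0-excess} (the ``$k=0$'' case) by moving to an auxiliary bipartite graph. The guiding observation is that the $k$-excess of a vertex $y$ differs from its $0$-excess only because the $0$-excess is allowed to look at the highest-degree neighbour of $y$, whereas the $k$-excess must look only at the $k$-th highest. So I would like to work in a graph in which, for each $y$, its $k-1$ top-degree neighbours have been made unavailable. The key point is that this has to be done \emph{edge by edge} --- for each $y$ separately, delete the edges joining $y$ to its $k-1$ highest-$G$-degree neighbours. Deleting instead a single set of high-degree vertices of $X$ would shrink the $0$-excess too much and send the final inequality the wrong way.

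First I would dispose of the easy regime. If $\delta(Y')\le k-1$ then, using $|Y'|-|X|\le|Y'|$, the right-hand side of the claim is at most $(k-1)|Y'|-2k|Y'|<0\le\sum_{y\in Y'}\kex_G(y)$, so there is nothing to prove; hence assume $\delta(Y')\ge k$, so every $y\in Y'$ has at least $k$ neighbours. Write $x_1(y),x_2(y),\dots,x_{d_G(y)}(y)$ for the neighbours of $y$ in $G$ in order of non-increasing degree, as in the definition of $\kex$.

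Now define a bipartite graph $H$ on $X\cup Y'$ by keeping, for each $y\in Y'$, only the edges from $y$ to $x_k(y),x_{k+1}(y),\dots,x_{d_G(y)}(y)$ (that is, discarding $y$'s $k-1$ highest-degree neighbours). Two facts about $H$ carry the proof. (i) For every $y\in Y'$ we have $d_H(y)=d_G(y)-(k-1)$, so the smallest $Y'$-degree in $H$ equals $\delta(Y')-(k-1)$. (ii) Since $H$ is a subgraph of $G$, every $x\in X$ has $d_H(x)\le d_G(x)$; consequently every neighbour $x$ of $y$ in $H$ is some $x_j(y)$ with $j\ge k$ and satisfies $d_H(x)\le d_G(x)=d_G(x_j(y))\le d_G(x_k(y))$, whence
$$\ex_H(y)\le\max\!\big(0,\,d_G(x_k(y))-d_H(y)\big)=\max\!\big(0,\,d_G(x_k(y))-d_G(y)+(k-1)\big)\le\kex_G(y)+(k-1).$$

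Finally I would apply Lemma~\ref{Lemma_0-excess} to $H$ with parts $X$ and $Y'$: by (i) it gives $\sum_{y\in Y'}\ex_H(y)\ge(\delta(Y')-(k-1))(|Y'|-|X|)$, and combining this with (ii) and the bound $|Y'|-|X|\le|Y'|$,
$$\sum_{y\in Y'}\kex_G(y)\ \ge\ \sum_{y\in Y'}\ex_H(y)-(k-1)|Y'|\ \ge\ \big(\delta(Y')-(k-1)\big)(|Y'|-|X|)-(k-1)|Y'|\ \ge\ \delta(Y')(|Y'|-|X|)-2(k-1)|Y'|,$$
which is even a little stronger than claimed. The only genuine obstacle in all of this is orienting every estimate correctly --- this is exactly what forces the deleted neighbours to be the \emph{top} ones and forces the deletion to be carried out per vertex rather than globally; once that design choice is made the rest is routine bookkeeping, with bipartiteness used only to guarantee that this edge deletion lowers $X$-degrees while lowering each $Y'$-degree by exactly $k-1$. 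It is also worth keeping half an eye on the degenerate cases (vertices $y$ with $d_G(y)<k$, vertices left isolated in $H$, and $|Y'|$ bigger or smaller than $|X|$), but each of these is either vacuous or comfortably absorbed by the gap between $2(k-1)$ and $2k$.
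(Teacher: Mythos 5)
Your proof is correct and is essentially the paper's own argument: both reduce to Lemma~\ref{Lemma_0-excess} by deleting, for each $y\in Y'$, the edges to its highest-degree neighbours and tracking how this shifts $\delta(Y')$ and the excesses. The only difference is that you delete $k-1$ such edges per vertex rather than the paper's $k$, which gives the marginally better constant $2(k-1)$ in place of $2k$.
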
  
\begin{proof}
 Obtain a subgraph $G'$ as follows: delete all vertices of $Y\setminus Y'$. For each $y\in Y'$ delete $k$ edges going to the $k$ vertices of largest degrees in $N(y)$. Notice that $\kex_G(y)\geq \ex_{G'}(y)-k$ for each $y\in Y'$ and also $\delta_{G'}(Y')= \delta_G(Y')-k$. The result follows from Lemma~\ref{Lemma_0-excess} applied to $G'$.
\end{proof}

We associate a bipartite graph to every family $\mathcal T$.
\begin{definition}[Availability graph]
Let $\mathcal T=\{T_1, \dots, T_{m}\}$ be a family of rainbow independent sets in a coloured matroid $M$. The \emph{availability graph}  of $\mathcal T$, denoted $A(\mathcal T)$, is the  bipartite graph with parts $\{T_1, \dots, T_m\}$ and $C(M)$, and with $T_ic_j$ an edge of $A(\mathcal T)$ whenever $c_j\not\in C(T_i)$. 
\end{definition}

Notice that the degree $d_{A(\mathcal T)}(T_i)$ is the number of colours missing from $T_i$ and the degree $d_{A(\mathcal T)}(c)$ is the number of independent sets missing $c$. The two different definitions of excess that we introduced should now make sense because we have $\nex(c,\mathcal T)=\ex_{A(\mathcal T)}(c)$ for any colour missing from some $T\in \mathcal T$ (whereas for colours present on all $T\in \mathcal T$, the definitions disagree since we have $\nex(c,\mathcal T)=n$ and $\ex_{A(\mathcal T)}(c)=0$). Lemma~\ref{Lemma_sketch_excess}  can now easily be deduced from Lemma~\ref{Lemma_0-excess} (although it is not used in the proof). The following is the  analogue of Lemma~\ref{Lemma_sketch_increment} we use.

\begin{lemma}[Increment lemma]\label{Lemma_increment} 
Let $\mathcal T$ be a family of $\leq n$ rainbow independent sets in a coloured matroid $M$ with $n$ colours of size $\geq n$,  and $c$ a colour. At least one of the following holds:
\begin{enumerate}[(i)]
\item There is some $T_i\in \mathcal T$ for which there are at least $d_{A(\mathcal T)}(c)+\frac12\kex_{A(\mathcal{T})}(c)$ colour $c$ elements $e\not\in E(\mathcal T)$ with $T_i+e$ a rainbow independent set.
\item The $\ell$-reduced family has $e_{\mathcal{T_{\ell}'}}(c)\leq e_{\mathcal T}(c)-\frac12\kex_{A(\mathcal T)}(c)+ \ell n/k$.
\end{enumerate}
\end{lemma}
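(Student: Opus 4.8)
The plan is to prove a localized version of the dichotomy behind the sketched Lemma~\ref{Lemma_sketch_increment}, working only with the $k$ highest-degree neighbours of $c$ in the availability graph $A(\mathcal T)$. If $c$ has fewer than $k$ neighbours in $A(\mathcal T)$, or if $\kex_{A(\mathcal T)}(c)=0$, then conclusion (ii) holds trivially, since reduction only deletes elements and hence $e_{\mathcal{T_{\ell}'}}(c)\le e_{\mathcal T}(c)$. So I would assume $c$ has at least $k$ neighbours and set $s:=\kex_{A(\mathcal T)}(c)>0$; let $T_{i_1},\dots,T_{i_k}$ be the $k$ neighbours of $c$ in $A(\mathcal T)$ of largest degree. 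By the definition of $k$-excess applied to the vertex $c$, the $k$-th largest neighbour degree equals $d_{A(\mathcal T)}(c)+s$, so $d_{A(\mathcal T)}(T_{i_j})\ge d_{A(\mathcal T)}(c)+s$ for every $j\in\{1,\dots,k\}$.

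The first step is to count augmenting elements. Fix $j$. Since $T_{i_j}$ misses colour $c$, adding any colour-$c$ element to it keeps it rainbow; and since the colour class $E(c)$ of $c$ is an independent set with $|E(c)|\ge n$ while $T_{i_j}$ is independent, repeatedly applying the augmentation property to $T_{i_j}$ inside $E(c)$ produces at least $|E(c)|-|T_{i_j}|\ge n-|T_{i_j}|$ distinct elements $e\in E(c)$ with $T_{i_j}+e$ independent. As $T_{i_j}$ is rainbow and $M$ has $n$ colours, $n-|T_{i_j}|=d_{A(\mathcal T)}(T_{i_j})\ge d_{A(\mathcal T)}(c)+s$. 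Hence each $T_{i_j}$ has at least $d_{A(\mathcal T)}(c)+s$ colour-$c$ elements $e$ for which $T_{i_j}+e$ is a rainbow independent set.

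Now I would split these augmenting elements according to whether they lie outside or inside $E(\mathcal T)$. If for some $j$ at least $d_{A(\mathcal T)}(c)+\tfrac12 s$ of them lie outside $E(\mathcal T)$, then conclusion (i) holds with $T_i=T_{i_j}$, and we are done. Otherwise, for every $j$ the set $F_j$ of colour-$c$ elements $e\in E(\mathcal T)$ with $T_{i_j}+e$ rainbow independent satisfies $|F_j|\ge(d_{A(\mathcal T)}(c)+s)-(d_{A(\mathcal T)}(c)+\tfrac12 s)=\tfrac12 s$. The final step is a double count: setting $a_e=|\{\,j:e\in F_j\,\}|\le k$ for each $e\in E_{\mathcal T}(c)$, we have $\sum_{e\in E_{\mathcal T}(c)}a_e=\sum_{j=1}^{k}|F_j|\ge\tfrac12 ks$. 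Any $e$ with $a_e\ge\ell$ witnesses at least $\ell$ distinct sets $T\in\mathcal T$ with $T+e$ rainbow independent, so it is deleted in $\mathcal{T_{\ell}'}$; and the elements with $a_e<\ell$ contribute at most $\ell\cdot e_{\mathcal T}(c)\le\ell n$ to the sum, since $\mathcal T$ has at most $n$ sets and so $e_{\mathcal T}(c)\le n$. Therefore at least $\tfrac12 s-\ell n/k$ colour-$c$ elements of $\mathcal T$ get deleted, which gives $e_{\mathcal{T_{\ell}'}}(c)\le e_{\mathcal T}(c)-\tfrac12 s+\ell n/k=e_{\mathcal T}(c)-\tfrac12\kex_{A(\mathcal T)}(c)+\ell n/k$, i.e.\ conclusion (ii).

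The subtleties here are bookkeeping rather than conceptual, and the step I would treat most carefully is the chain $n-|T_{i_j}|=d_{A(\mathcal T)}(T_{i_j})\ge d_{A(\mathcal T)}(c)+\kex_{A(\mathcal T)}(c)$: this is exactly where the definition of $k$-excess enters, and it relies on $T_{i_j}$ being among the $k$ largest-degree neighbours of $c$, not merely some neighbour. A second point is to align the thresholds in the dichotomy with the statement, so that ``at least $d_{A(\mathcal T)}(c)+\tfrac12 s$ outside'' is literally conclusion (i) and its negation leaves $\ge\tfrac12 s$ of each $F_j$ inside $E(\mathcal T)$. The double count itself is a routine pigeonhole, with the error term $\ell n/k$ appearing precisely because the ``small-$a_e$'' contribution of size $\ell n$ is divided by the multiplicity bound $k$; beyond reconciling these constants with the stated inequality I do not anticipate a genuine obstacle.
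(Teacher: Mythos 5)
Your proposal is correct and follows essentially the same argument as the paper: identify the $k$ largest-degree neighbours of $c$ in $A(\mathcal T)$, use augmentation within the colour class $E(c)$ to get at least $d_{A(\mathcal T)}(c)+\kex_{A(\mathcal T)}(c)$ colour-$c$ extensions of each, split by whether these lie in $E(\mathcal T)$, and finish with the same double count (the paper phrases it via an auxiliary bipartite graph $H'$ and the set $E_{\geq \ell}$, which is exactly your multiplicity count with the $a_e$). No gaps; the constants and the $\ell n/k$ error term come out identically.
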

\begin{proof}
If  $\kex_{A(\mathcal T)}(c)=0$, then (ii) is trivially true, so we can assume that  $\kex_{A(\mathcal T)}(c)>0$.
By the definition of $\kex_{A(\mathcal T)}(c)$, there are $k$ rainbow independent sets $T_1,\dots, T_k\in \mathcal T$ with each $T_i$ missing colour $c$ and each $T_i$ missing at least $d_{A(\mathcal T)}(c)+\kex_{A(\mathcal T)}(c)$ colours in total. 
Define a bipartite graph $H$ whose parts are $X=\{T_1,\dots, T_k\}$ and $E(c)$ with $T_ie$ an edge whenever $T_i+e$ is a (rainbow) independent set. Using the augmentation property and the fact that $M$ consists of $n$ colours of size $\geq n$, we have $d_H(T_i)\geq |E(c)|-|T_i|\geq n-|T_i|= d_{A(\mathcal T)}(T_i)\geq d_{A(\mathcal T)}(c)+\kex_{A(\mathcal T)}(c)$  for each $i=1, \dots, k$. Equivalently, there are at least $d_{A(\mathcal T)}(c)+\kex_{A(\mathcal T)}(c)$ colour $c$ elements $e$ with $T_i+e$ a rainbow independent set. If, for some $i=1, \dots, k$, at least $d_{A(\mathcal T)}(c)+\frac12\kex_{A(\mathcal T)}(c)$ of these have $e\not\in E(\mathcal T)$, then case (i) of the lemma holds.

Thus we can assume that for all $i=1, \dots, k$, there are $\geq \frac12\kex_{A(\mathcal T)}(c)$ colour $c$ elements $e\in E(\mathcal T)$ with $T_i+e$ a rainbow independent set. Let $H'$ be the induced subgraph of $H$ on $X=\{T_1,\dots, T_k\}$ and $E(c)\cap E(\mathcal T)$ (so we have $\delta_{H'}(X)\geq \frac12\kex_{A(\mathcal T)}(c)$). 
Let  $E_{\geq \ell}\subseteq{ E(c)\cap E(\mathcal T)}$ be the set of elements $e$ with $d_{H'}(e)\geq \ell$.   We have 
$$|X||E_{\geq \ell}|+\ell|E(c)\cap E(\mathcal T)|\geq \sum_{e\in E_{\geq \ell}}|X|+\sum_{e\in E(c)\cap E(\mathcal T)\setminus E_{\geq \ell}}\ell\geq e(H')\geq |X|\frac12\kex_{A(\mathcal T)}(c).$$
Using $|X|=k$ and rearranging gives $|E_{\geq \ell}|\geq \frac12\kex_{A(\mathcal T)}(c)- \ell|E(c)\cap E(\mathcal T)|/k\geq \frac12\kex_{A(\mathcal T)}(c)-\ell n/k$.
From the definition of the $\ell$-reduced family $\mathcal T'_{\ell}$,   we have $E_{\mathcal T'_{\ell}}(c)=E_{\mathcal T}(c)\setminus E_{\geq \ell}$, implying (ii).
  \end{proof}

The following is the  analogue of Lemma~\ref{Lemma_sketch_switching} we use. For technical reasons there are two families $\mathcal S,\mathcal T$ in this lemma, but   the most important case is when $\mathcal S=\mathcal T$. In that case the lemma is  an extension of Lemma~\ref{Lemma_sketch_switching} to larger $r,\ell$.
\begin{lemma}[Switching lemma]\label{Lemma_switching}
Let $\ell \geq 9^{r}$. 
Let $\mathcal T=\{T_1, \dots, T_m\}$, $\mathcal S=\{S_1, \dots, S_m\}$ be two families of disjoint rainbow independent sets in a coloured matroid $M$ with $T_i\subseteq S_i$ for all $i$. 
Suppose we have  an $e \not\in E(\mathcal S)$ with $T_0+e$ rainbow and independent for some $T_0\in \mathcal T_{\ell}^{(r)}$. 
Then there is a family of disjoint rainbow independent sets $\mathcal S^*$ with  $E(\mathcal S^*)=\{e\}\cup( E(\mathcal S)\setminus X)$, for some   $X\subseteq  E(\mathcal S)\setminus E(\mathcal T)$  with $|X|\leq 2^{r+1}$.
\end{lemma}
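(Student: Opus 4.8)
The plan is to prove the lemma by induction on $r$, reading the conclusion as saying that a single element $e$ can be \emph{pushed into} the family $\mathcal S$ while disturbing only boundedly many elements outside the ``protected'' family $\mathcal T$. Write $\mathcal T^{(j)}=\mathcal T_\ell^{(j)}=\{T_1^{(j)},\dots,T_m^{(j)}\}$ and $T_0=T_{i_0}^{(r)}$ for some index $i_0$, so that $T_0\subseteq T_{i_0}\subseteq S_{i_0}$. For the base case $r=0$ (where $T_0=T_{i_0}$) I would argue by pure matroid augmentation: let $g$ be the at-most-one element of $S_{i_0}$ of colour $c(e)$, which lies outside $T_0$ since $T_0+e$ is rainbow; applying the augmentation property inside $(S_{i_0}-g)\cup\{e\}$ to the independent set $T_0+e$ yields an independent set $I\supseteq T_0+e$ whose size is the rank of $(S_{i_0}-g)\cup\{e\}$, which is at least $|S_{i_0}|-1$, so $I$ omits at most one further element $h$ of $(S_{i_0}-g)\setminus T_0$. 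Thus $I$ is rainbow and independent with $I=S_{i_0}-X+e$ for some $X\subseteq\{g,h\}$; replacing $S_{i_0}$ by $I$ gives $\mathcal S^*$, with disjointness preserved (as $e\notin E(\mathcal S)$), with $X\subseteq S_{i_0}\setminus T_{i_0}\subseteq E(\mathcal S)\setminus E(\mathcal T)$ (as the families are disjoint and $T_j\subseteq S_j$), and with $|X|\le 2=2^{0+1}$.

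For the inductive step $r\ge1$, I would first run this same augmentation at slot $i_0$ but starting from $T_0=T_{i_0}^{(r)}$, getting $F\subseteq S_{i_0}\setminus T_{i_0}^{(r)}$ with $|F|\le2$ and $S_{i_0}-F+e$ rainbow and independent, and then split $F=F_{\mathrm{out}}\sqcup F_{\mathrm{in}}$ with $F_{\mathrm{out}}=F\setminus E(\mathcal T)$ and $F_{\mathrm{in}}=F\cap E(\mathcal T)=F\cap(T_{i_0}\setminus T_{i_0}^{(r)})$. Since $T_{i_0}\setminus T_{i_0}^{(r)}=\bigsqcup_{j=1}^{r}\big(T_{i_0}^{(j-1)}\setminus T_{i_0}^{(j)}\big)$, each $f\in F_{\mathrm{in}}$ lies in some $T_{i_0}^{(j_f-1)}\setminus T_{i_0}^{(j_f)}$ with $j_f\le r$, which by the definition of the $\ell$-reduced family means there are at least $\ell$ indices $k$ with $T_k^{(j_f-1)}+f$ rainbow and independent. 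My plan is then to carry out the switch at slot $i_0$ (replace $S_{i_0}$ by $S_{i_0}-F+e$), discard $F_{\mathrm{out}}$ permanently into the eventual set $X$, and re-insert each $f\in F_{\mathrm{in}}$ by applying the lemma recursively at level $j_f-1\le r-1$ — legitimate since $\ell\ge9^{r}\ge9^{\,j_f-1}$, since after the switch $f$ is no longer in the family, and since $f$ has a landing set at level $j_f-1$ (modulo the perturbation issue discussed below). The recursion returns a set $X_f$ with $|X_f|\le2^{(j_f-1)+1}\le2^{r}$, and I would take $X=F_{\mathrm{out}}\cup\bigcup_{f\in F_{\mathrm{in}}}X_f$, which satisfies $X\subseteq E(\mathcal S)\setminus E(\mathcal T)$ and $|X|\le|F_{\mathrm{out}}|+|F_{\mathrm{in}}|\cdot 2^{r}\le 2\cdot 2^{r}=2^{r+1}$.

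The hard part — and the place the constant $9^r$ is really needed — is making these recursive calls legitimate, which requires choosing the right ``protected family'' for each call and controlling the interactions between the (at most two) recursion branches. I expect three things to need simultaneous care. First, the family protected in the call that re-inserts $f$ must be contained in the current family, which forces us to delete $F_{\mathrm{in}}$ from slot $i_0$ of $\mathcal T$ while adjoining $e$ to that slot (so that $e$ is itself protected and cannot be evicted by a later branch); one must check $(T_{i_0}\setminus F_{\mathrm{in}})+e$ is rainbow and independent, which holds because it sits inside $S_{i_0}-F+e$. Second, after this modification the $(j_f-1)$-fold $\ell$-reduction of the modified protected family must \emph{still} contain a landing set for $f$; this is where one uses that the reduction operation is robust under bounded perturbations — changing one member of a family alters, for each fixed element $e'$, the number of sets $T_j$ with $T_j+e'$ rainbow and independent by at most one — so the $\ell$ available landing sets survive provided $\ell$ is large enough. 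Third, the landing slots used by the different branches, together with $i_0$, must be pairwise distinct, and no branch may evict an element re-inserted by an earlier branch; both are arranged by folding each re-inserted element into the protected family used for all subsequent branches. Each level of the (depth $\le r$, branching $\le2$) recursion, and each of the $O(2^{r})$ slots it touches, eats a little of the slack in the parameter $\ell$; the bound $\ell\ge9^{r}$ is a clean choice comfortably exceeding this at-most-exponential-in-$r$ loss, so the $\ell$ landing sets furnished by the reduction are never used up. I would note finally that the case $\mathcal S=\mathcal T$ is the only one needed later, and that allowing $\mathcal S\neq\mathcal T$ in the statement is exactly what makes the induction close, since it lets each recursive call protect a family different from its ambient one.
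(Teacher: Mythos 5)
Your overall architecture is the same as the paper's: induction on $r$, a two-element augmentation at the host slot, re-insertion of any deleted element of $E(\mathcal T)$ by a recursive call at the reduction level at which it was deleted (using the $\geq\ell$ landing sets that the definition of the reduced family provides), and the budgets $|X|\le 2^{r+1}$, $\ell\ge 9^{r}$. The genuine gap is in the mechanism you use to protect $e$ and the already re-inserted elements and to keep the two branches from interfering: you \emph{add} elements to the protected family (replace $T_{i_0}$ by $(T_{i_0}\setminus F_{\mathrm{in}})+e$, and later fold each re-inserted $f$ into its landing slot), and you then need the $(j_f-1)$-fold $\ell$-reduction of this perturbed family to still contain a landing set for $f$. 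Your justification --- changing one member of the family changes each element's acceptor count by at most one, so the $\ell$ landing sets survive when $\ell\ge 9^{r}$ --- controls only a \emph{single} round of reduction. With a fixed threshold $\ell$, after one round the perturbed and unperturbed reduced families can already differ on arbitrarily many elements (every element whose acceptor count sat exactly at the threshold can flip), spread over arbitrarily many slots; at the next round these differences feed back into the acceptor counts and can cascade, and in particular the perturbed reduced sets can end up strictly \emph{larger} than the original ones, in which case the chosen landing set need no longer accept $f$ (colour clash or dependence). So ``iterated reduction is robust under a one-set, non-monotone perturbation'' is a substantive unproved claim, not a routine verification, and $\ell\ge 9^{r}$ does not obviously deliver it. The same unproved claim is needed again, compounded, for your second branch.

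The paper avoids exactly this issue by never adding anything to $\mathcal T$: it forms $\hat{\mathcal T}=\mathcal T-T_0'+(T_0'\setminus\{e_1\})$, i.e.\ only deletes, and uses the easy monotonicity of reduction (if $\hat{\mathcal T}\subseteq\mathcal T$ setwise then $\hat{\mathcal T}^{(j)}_{\ell}\subseteq\mathcal T^{(j)}_{\ell}$), so the chosen landing set only shrinks and still accepts $e_1$. Protection of $S_0$ (hence of $e$, which is inserted only at the very end) and non-interference of the two branches are instead carried through the induction by two extra clauses added to the statement being proved: the output differs from $\mathcal S$ on at most $3^{r}$ sets, and any prescribed subfamily of at most $\ell/2^{r}$ sets avoiding $S_0$ can be required to remain untouched; the second branch then simply protects the $\le 3^{r}$ sets altered by the first. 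To repair your write-up you would either have to prove a genuine robustness statement for iterated reduction under additions (plausibly requiring thresholds that degrade from level to level), or re-base the induction on a strengthened hypothesis of the paper's kind and keep the protected family shrinking only.
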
  
\begin{proof}
Let $S_0\in \mathcal S$, $T_0'\in \mathcal T$  be the sets with $T_0\subseteq T_0'\subseteq S_0$.
We prove that additionally one can ensure the following:
\begin{enumerate}[(i)]
\item $\mathcal S$ and $\mathcal S^*$ differ on at most $3^r$ independent sets.
\item For any subfamily $\mathcal R\subset \mathcal S$ of $\ell/2^{r}$ independent sets with $S_0\not\in\mathcal R$, we can ensure that $\mathcal R$ is a subfamily of $\mathcal S^*$ also.  
\end{enumerate}
The proof is by induction on $r$ (with $\ell$ being fixed for the duration of the proof. So we prove the cases $r=0,1,\dots, \lfloor \log_9 \ell\rfloor$ in order). For the initial case ``$r=0$'', notice that using the augmentation property there are $e_1, e_2\in S_0$ with $S_0\setminus\{e_1, e_2\}\cup\{e\}$ rainbow and independent. Not the family $S^*=S-S_0+S_0\setminus\{e_1, e_2\}\cup\{e\}$ satisfies the lemma (recalling that ``$\mathcal T^{(0)}_{\ell}$'' just means $\mathcal T$). 

Now suppose that the lemma holds for some $r$.
Let $e\not\in \mathcal S$ with $e+T_0$ rainbow and independent for some $T_0\in E(\mathcal T^{(r+1)}_{\ell})$ and let  $\mathcal R\subset \mathcal S$ be a subfamily of   $\leq \ell/2^{r+1}$ independent sets with $S_0\not\in \mathcal R$. 
Using the augmentation property, there are two (or one) elements $e_1, e_2\in E(\mathcal S) \setminus E(\mathcal T_{\ell}^{(r+1)})$ with 
$S_0-e_1-e_2+e$ rainbow and independent.

We claim that there is a family $\mathcal S^*_1$ with $S_0\setminus e_1\in S^*_1$,
 having $E(\mathcal S^*_1)=E(\mathcal S)\setminus X_1$ for some  $X_1\subseteq  E(\mathcal S)\setminus E(\mathcal T)$  with $|X_1|\leq 2^{r+1}$, and $\mathcal S^*_1$ agreeing with $\mathcal S$ on all sets of $\mathcal R$.
 If $e_1\not\in E(\mathcal T)$, then we can use $\mathcal S^*_1=\mathcal S- S_0+S_0\setminus \{e_1\}$, so suppose $e_1\in E(\mathcal T)$.
Fix $\hat{\mathcal S}=\mathcal S - S_0+S_0\setminus \{e_1\}$ and $\hat{\mathcal T}=\mathcal T - T_0'+T_0'\setminus \{e_1\}$. 
Choose $r_1\leq r$ with $e_1\not\in E({\mathcal T}^{(r_1+1)}_{\ell})$ and $e_1\in E({\mathcal T}^{(r_1)}_{\ell})$. By definition of the $\ell$-reduced family $\mathcal T^{(r_1+1)}_{\ell}$, there are $\ell$ choices of $T_j\in \mathcal \mathcal T^{(r_1)}_{\ell}$ with $T_j+e_1$ a rainbow independent set. Since $\ell>|\mathcal R|$, we can choose such a $T_j$ which is outside $\mathcal R\cup\{S_0\}$. Let $\hat T_j$ be the corresponding independent set of $\hat{\mathcal T}_{\ell}^{(r_1)}$ and notice that $\hat T_j\subseteq T_j$ (it is easy to see that reduction is monotone in the sense that for two families with $\hat {\mathcal T}\subseteq \mathcal T$, we always have  $\hat {\mathcal T}^{(r)}_{\ell}\subseteq \mathcal T^{(r)}_{\ell}$.
Notice that $e_1\not\in E(\hat{\mathcal S})$ and has $e_1+\hat {T_j}$ rainbow and independent. By induction,  there is a family of disjoint rainbow independent sets $ {\mathcal S}_1^*$ with $e_1\in E({\mathcal S}_1^*)$,  $E(\hat {\mathcal S})\setminus E({\mathcal S}_1^*)\subseteq  E(\hat {\mathcal S})\setminus E(\hat {\mathcal T})=E({\mathcal S})\setminus E({\mathcal T})$  and $|E(\hat {\mathcal S})\setminus E({\mathcal S}_1^*)|\leq 2^{r_1+1}$. Additionally, $\hat {\mathcal S}$ and $ {\mathcal S}_1^*$ differ on at most $3^{r_1}$ independent sets, and agree on all independent sets of $\mathcal R_1\cup\{S_0\setminus e_1\}$. 

Similarly,  there is a family $\mathcal S^*_2$ with $S_0\setminus e_2\in S^*_2$,
 having $E(\mathcal S^*_2)=E(\mathcal S)\setminus X_2$ for some  $X_2\subseteq  E(\mathcal S)\setminus E(\mathcal T)$  with $|X_2|\leq 2^{r+1}$, and $\mathcal S^*_2$  agreeing with $\mathcal S$ on all sets of $\mathcal R\cup  (\mathcal S\setminus \mathcal S^*_1)\setminus\{S_0\}$ (using that there was room in the inequality $\ell>|\mathcal R|$ to increase $\mathcal R$ by the $\leq 3^r$ members of $(\mathcal S\setminus \mathcal S^*_1)\setminus\{S_0\}$). 
Note that on every set $S_i\in \mathcal S$, other than $S_0$, at most one of $\mathcal S^*_1/\mathcal S^*_2$ differs from $\mathcal S$. Construct  $\mathcal S^*$ from $\mathcal S$ by replacing $S_0$ by $S_0\setminus\{e_1,e_2\}\cup \{e\}$, and otherwise replacing every set of $\mathcal S^*$ by the corresponding set of $\mathcal S^*_1/\mathcal S^*_2$ when they differ.
 Formally $\mathcal S^*= \mathcal S^*_1\setminus (\mathcal S\cup\{S_0\setminus e_1\})+ \mathcal S^*_2\setminus (\mathcal S\cup\{S_0\setminus e_2\})+ \mathcal S^*_1\cap \mathcal S^*_2\cap \mathcal S+\{S_0\setminus\{e_1,e_2\}\cup \{e\}\}$. This family satisfies the lemma  and (i), (ii) ($e\in \mathcal S^*$ holds by construction, whereas the other properties come from the fact that the only sets on which $\mathcal S^*$ differs from $\mathcal S$ were ones where $\mathcal S^*_1/\mathcal S^*_2$ differed from $\mathcal S$).  
\end{proof}

We are now ready to show that Rota's conjecture holds asymptotically.
\begin{proof}[Proof of Theorem~\ref{Theorem_main}]
Fix $r_0=100/\epsilon^2$, $\ell =9^{r_0}$,  $k=\epsilon^{-2}\ell /16$, and $n\geq 100^{r_0}$.  Define a partial order $\glex$ on  families of disjoint rainbow independent sets $\mathcal S, \mathcal T$: Order the colours of $\mathcal S$ as $c_1, \dots, c_n$ with $e_{\mathcal S}(c_1)\leq \dots \leq e_{\mathcal S}(c_n)$, and order the colours of $\mathcal T$ as $d_1, \dots, d_n$ with $e_{\mathcal T}(d_1)\leq \dots \leq e_{\mathcal T}(d_n)$. We say $\mathcal S\glex \mathcal T$ if the smallest index $q$ with $e_{\mathcal S}(c_q)\neq e_{\mathcal T}(d_q)$ has $e_{\mathcal S}(c_q)> e_{\mathcal T}(d_q)$. Notice that ``$\glex$'' is a partial order.
 
 Let $M$ be a coloured matroid with $n$ colours of size $n$. Let $\mathcal R$ be a family of $(1-\epsilon)n$ disjoint rainbow independent sets in $M$ which is maximal with respect to $\glex$. Suppose for the sake of contradiction that there are at more than $\epsilon n$ colours $c$ with $e_{\mathcal R}(c)\leq (1-3\epsilon)n$. 

Order the colours $c_1, \dots, c_n$ with $e_{\mathcal R}(c_1)\leq e_{\mathcal R}(c_2)\leq \dots \leq e_{\mathcal R}(c_n)$. Choose $m$ to be the largest index with $e_{\mathcal R}(c_m)\leq (1-2\epsilon)n$ and $e_{\mathcal{R}}(c_{m+(3^{r_0})/\epsilon})\geq e_{\mathcal R}(c_m)+3^{r_0}$. To see that such an $m$ does indeed exist --- notice that either $m=n$ works, or $e_{\mathcal{R}}(c_n)> (1-2\epsilon)n$ and for each $i$ $e_{\mathcal{R}}(c_{i+(3^{r_0})/\epsilon})-3^{r_0}< e_{\mathcal R}(c_i)$. In the latter case we get
$e_{\mathcal{R}}(c_1)\geq e_{\mathcal{R}}(c_n)-3^{r_0}\frac{n}{3^{r_0}/\epsilon}> (1-3\epsilon)n$ which is a contradiction.  

We call the colours $c_1, \dots, c_{m-1}$ \emph{small}, the colours $c_{m}, \dots, c_{m+(3^{r_0})/\epsilon}$ \emph{medium}, and the colours $c_{m+(3^{r_0})/\epsilon+1}, \dots, c_n$ \emph{large}. By assumption, there are $> \epsilon n$ small colours. For a family $\mathcal F$, use $E_{\mathrm{small}}(\mathcal F)$/$E_{\mathrm{medium}}(\mathcal F)$/$E_{\mathrm{large}}(\mathcal F)$ to denote the sets of elements of corresponding colours in $\mathcal F$.  

\begin{claim} \label{Theorem_claim}
There is a family of disjoint rainbow independent sets ${\mathcal R}^*$ with $E_{\mathrm{small}}(\mathcal R^*) = E_{\mathrm{small}}(\mathcal R^*)+e$ for some element $e$ outside $\mathcal R$, $E_{\mathrm{medium}}(\mathcal R^*) = E_{\mathrm{medium}}(\mathcal R^*)$, and $|E_{\mathrm{large}}(\mathcal R)\setminus E_{\mathrm{large}}(\mathcal R^*)|\leq 2^{r_0+1}$.
\end{claim}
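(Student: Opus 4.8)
The plan is to obtain $\mathcal R^*$ from a single application of the Switching Lemma (Lemma~\ref{Lemma_switching}). Let $\mathcal T$ be the subfamily of $\mathcal R$ obtained by deleting every element of large colour; then $\mathcal T\subseteq\mathcal R$ (with the same index set), $E(\mathcal R)\setminus E(\mathcal T)=E_{\mathrm{large}}(\mathcal R)$, and $e_{\mathcal T}(c)=e_{\mathcal R}(c)$ for each small or medium colour $c$. Suppose we can find an integer $r\le r_0$, a small colour $c$, a set $T_0\in\mathcal T^{(r)}_{\ell}$ and an element $e\notin E(\mathcal R)$ of colour $c$ with $T_0+e$ rainbow and independent. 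Applying Lemma~\ref{Lemma_switching} with $\mathcal S=\mathcal R$ and this $\mathcal T$ (legal since $\ell=9^{r_0}\ge 9^{r}$) yields a family $\mathcal R^*$ of disjoint rainbow independent sets with $E(\mathcal R^*)=\{e\}\cup(E(\mathcal R)\setminus X)$, where $X\subseteq E(\mathcal R)\setminus E(\mathcal T)=E_{\mathrm{large}}(\mathcal R)$ and $|X|\le 2^{r+1}\le 2^{r_0+1}$. As $c$ is small and $X$ consists only of large elements, $\mathcal R^*$ gains exactly the small element $e$, keeps all medium elements, and loses at most $2^{r_0+1}$ large elements, i.e.\ it satisfies the claim. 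So it suffices to produce such a quadruple $(r,c,T_0,e)$.

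\textbf{The dichotomy.} To produce it, I would fix a round $r\in\{0,\dots,r_0\}$ and a small colour $c$ and apply the Increment Lemma (Lemma~\ref{Lemma_increment}) to the family $\mathcal T^{(r)}_{\ell}$ and the colour $c$ (the hypotheses hold: $\mathcal T^{(r)}_{\ell}$ has $(1-\epsilon)n\le n$ sets and $M$ has $n$ colours of size $n$). If conclusion (i) holds there is $T_i\in\mathcal T^{(r)}_{\ell}$ with at least $d_{A(\mathcal T^{(r)}_{\ell})}(c)+\tfrac12\kex_{A(\mathcal T^{(r)}_{\ell})}(c)$ colour-$c$ elements $e\notin E(\mathcal T^{(r)}_{\ell})$ for which $T_i+e$ is rainbow and independent; at most $e_{\mathcal R}(c)-e_{\mathcal T^{(r)}_{\ell}}(c)$ of these can lie in $E(\mathcal R)$, and since $d_{A(\mathcal T^{(r)}_{\ell})}(c)=(1-\epsilon)n-e_{\mathcal T^{(r)}_{\ell}}(c)$ and $e_{\mathcal R}(c)\le(1-2\epsilon)n$ for a small colour, this leaves at least $(1-\epsilon)n-e_{\mathcal R}(c)\ge\epsilon n\ge 1$ such elements outside $E(\mathcal R)$, giving the desired quadruple with $T_0=T_i$. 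Hence it remains to rule out the case where conclusion (ii) of Lemma~\ref{Lemma_increment} holds for every small colour $c$ and every $r\in\{0,\dots,r_0-1\}$.

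\textbf{Ruling out permanent reduction.} Assume that case. For each small colour $c$, conclusion (ii) reads $e_{\mathcal T^{(r+1)}_{\ell}}(c)\le e_{\mathcal T^{(r)}_{\ell}}(c)-\tfrac12\kex_{A(\mathcal T^{(r)}_{\ell})}(c)+\ell n/k$; telescoping over $r$ and using $e_{\mathcal T^{(r_0)}_{\ell}}(c)\ge0$, $e_{\mathcal T^{(0)}_{\ell}}(c)=e_{\mathcal R}(c)\le(1-2\epsilon)n$, and $r_0\ell/k=1600$ (from the choices of $r_0,\ell,k$), and then summing over all $m-1$ small colours, gives
$$\sum_{r=0}^{r_0-1}\ \sum_{c\ \mathrm{small}}\kex_{A(\mathcal T^{(r)}_{\ell})}(c)\ \le\ 3202\,(m-1)\,n .$$
To contradict this I would establish, for each individual $r$, a lower bound of order $\Omega\!\big(\epsilon^2(m-1)n\big)$ on $\sum_{c\ \mathrm{small}}\kex_{A(\mathcal T^{(r)}_{\ell})}(c)$; multiplied by the $r_0=100/\epsilon^2$ rounds this exceeds the displayed bound. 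The lower bound should come from the $k$-excess sum lemma (Lemma~\ref{Lemma_bipartite}) applied to a suitable subgraph of the availability graph $A(\mathcal T^{(r)}_{\ell})$. The structural input it needs is itself forced by our assumption: since no good quadruple exists, the augmentation property implies that for every $r\le r_0$, any set of $\mathcal T^{(r)}_{\ell}$ of size less than $3\epsilon n$ must contain every colour of count $\le(1-3\epsilon)n$, and there are more than $\epsilon n$ such colours; this pins down the degrees on both sides of the relevant bipartite graph well enough to feed Lemma~\ref{Lemma_bipartite} and to dominate its error term $2k(m-1)$ and the per-round errors $\ell n/k$ by the main term. The contradiction then shows conclusion (ii) cannot persist, so conclusion (i) occurs for some small colour at some round $\le r_0$, producing the quadruple and hence $\mathcal R^*$.

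\textbf{The main obstacle.} I expect the crux to be exactly this last lower bound on $\sum_{c\ \mathrm{small}}\kex_{A(\mathcal T^{(r)}_{\ell})}(c)$. A naive use of Lemma~\ref{Lemma_bipartite} with the small colours against the full family of $(1-\epsilon)n$ sets is vacuous whenever there are fewer than $(1-\epsilon)n$ small colours, so one must pass to a carefully chosen subfamily of sets — exploiting the forced largeness of reduced sets that miss a low-count colour so that the ``colour side'' of the restricted bipartite graph dominates its ``set side'' — and then verify that all the accumulated error terms really are dominated by the main term for the given (large) choices of $r_0$, $\ell$ and $k$. Everything else is bookkeeping built out of Lemmas~\ref{Lemma_switching} and~\ref{Lemma_increment}.
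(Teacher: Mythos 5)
Your outer wrapper is sound and matches the paper: taking $\mathcal T$ to be $\mathcal R$ with all large-colour elements deleted, finding a quadruple $(r,c,T_0,e)$ with $c$ small and $e\notin E(\mathcal R)$, and invoking Lemma~\ref{Lemma_switching} with $\mathcal S=\mathcal R$ does yield $\mathcal R^*$ as claimed, since $X\subseteq E(\mathcal S)\setminus E(\mathcal T)$ consists of large elements only; your counting in the case that conclusion (i) of Lemma~\ref{Lemma_increment} holds is also correct. But the heart of the claim --- ruling out that conclusion (ii) persists --- is exactly what you leave open, and your proposed route to it does not work as stated. You telescope $e_{\mathcal T^{(r)}_\ell}(c)$ per small colour and therefore need, per round, a lower bound on $\sum_{c\ \mathrm{small}}\kex_{A(\mathcal T^{(r)}_\ell)}(c)$ alone; you acknowledge that Lemma~\ref{Lemma_bipartite} applied to the small colours is vacuous when there are fewer than $(1-\epsilon)n$ small colours, and the fix you gesture at (restricting to a subfamily of sets, using that reduced sets missing a low-count colour must be large) does not feed Lemma~\ref{Lemma_bipartite}: restricting the set side changes both $d(c)$ and the neighbour degrees, so the excess of the subgraph does not control the excess that Lemma~\ref{Lemma_increment} actually uses, and ``sets missing such a colour are large'' bounds their availability-graph degrees from \emph{above}, which pushes against, not towards, a lower bound on colour excess. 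Moreover, even granting your hoped-for bound $\Omega(\epsilon^2(m-1)n)$ per round, $r_0=100/\epsilon^2$ rounds give only about $100(m-1)n$ against your own upper bound $3202(m-1)n$, so with the parameters fixed in the theorem the ledger does not close --- a symptom that per-small-colour bookkeeping is the wrong accounting.

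The paper resolves precisely this difficulty by three moves you are missing. First, it does not work with $\mathcal R$ itself but with a family $\mathcal S$ obtained by adding \emph{dummy medium-colour elements} so that every set contains every medium colour; then medium colours have degree $0$ and zero $k$-excess in every reduced availability graph, so conclusion (ii) is trivial for them. Second, it applies Lemma~\ref{Lemma_bipartite} with $Y'$ equal to the small \emph{and large} colours: because all large elements were deleted, large colours have full degree $(1-\epsilon)n$, so $\delta(Y')\geq\epsilon n$ while $|Y'|\geq n-3^{r_0}/\epsilon-1>|X|=(1-\epsilon)n$, giving $\sum_{c}\kex\geq\epsilon^2n^2/2$, and this bound survives all further deletions since deleting elements only adds edges to the availability graph. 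Third, it splits into two cases: if some $T\in\mathcal T^{(r)}_\ell$ has $|T|\leq\epsilon n$, a small colour absent from $T$ and the augmentation property produce the required element outside $\mathcal S$ directly; otherwise every large colour has zero excess (its degree $(1-\epsilon)n$ exceeds every set's degree), so the whole excess mass sits on small colours, and the paper telescopes the \emph{total} size $e(\mathcal T^{(i)}_\ell)\leq n^2$ with decrement $\Omega(\epsilon^2n^2)$ per round, which after $r_0$ rounds is the desired contradiction. Without these devices (the dummy elements, the inclusion of large colours in the excess sum, the small-set case, and total rather than per-colour telescoping), your plan has a genuine gap at its crux.
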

This claim implies the theorem since it implies  ${\mathcal R}^*\glex \mathcal R$ (contradicting maximality of ${\mathcal R}$). To see this notice that all large and medium colours have at least $e_{\mathcal R}(c_m)$ elements in $\mathcal R^*$ (for medium colours this happens because they are unchanged, whereas for large colours it happens because they lose at most $3^{r_0}$ elements, but initially had at least $e_{\mathcal{R}}(c_{m+(3^{r_0})/\epsilon})\geq e_{\mathcal R}(c_m)+3^{r_0}$ elements). However there is one extra small colour element $e$ in $\mathcal R^*$ (so the index $q$ in the definition of $R^*\glex R$ will be the one corresponding to $c(e)$). 
\begin{proof}[Proof of Claim~\ref{Theorem_claim}]
The basic idea of the proof is to apply the previous lemmas to $\mathcal R$ in order to find a small colour element which extends it. However we do not apply the lemmas to $\mathcal R$ directly (in order to avoid medium colours from being affected).
Instead we add ``dummy elements'' of  medium colours to independent sets from $\mathcal R$ in order to obtain a new family $\mathcal S$ with the property that every  $S\in \mathcal S$ contains every medium colour. These dummy elements need not come from $M$ --- we enlarge $M$ by adding as many new medium colour dummy elements as are needed in an arbitrary fashion (eg.   let the new dummy elements be independent from everything else).  

Let $\mathcal T$ be formed from $\mathcal S$ by deleting all large colour elements. Notice that $A(\mathcal T)$ has parts of size $|\mathcal T|=(1-\epsilon)n$ and $|C(M)|=n$ with all small colours having $d_{A(\mathcal T)}(c)\geq \epsilon n$, all medium colours having $d_{A(\mathcal T)}(c)=0$, and all large colours having $d_{A(\mathcal T)}(c)= (1-\epsilon)n\geq \epsilon n$. By Lemma~\ref{Lemma_bipartite} we have  
$$\sum_{c\in C(M)}\kex_{A(\mathcal T)}(c)\geq \sum_{c \text{ small/large}}\kex_{A(\mathcal T)}(c)\geq \epsilon n(|C(M)|-3^{r_0}/\epsilon -|\mathcal T|)- 2kn\geq \epsilon^2n^2/2$$ 
Additionally all steps in this calculation work  for any family $\mathcal T^*$ formed by deleting elements from $\mathcal T$ (since we'd have $A(\mathcal T)\subseteq A(\mathcal T^*)$). We claim that the following is true:

\begin{itemize}
\item[P:] There is some $r\leq r_0$, some small colour $c$, some colour $c$ element $e\not\in E_{\mathcal S}(c)$, and some independent set $T\in \mathcal T^{(r)}_{\ell}$ with $T+e$ a rainbow independent set.
\end{itemize}
There are two cases. First suppose that for some $r\leq r_0$ there is some $T\in \mathcal T^{(r)}_{\ell}$ with $d_{A(\mathcal T^{(r)}_{\ell})}(T)\geq (1-\epsilon)n$. Equivalently $|T|\leq \epsilon n$. Since there are $>\epsilon n$ small colours, there is some small colour $c$ absent from $T$. By definition of ``small colour'', $e_{\mathcal S}(c)\leq (1-2\epsilon)n$. By the augmentation property one of the $\geq 2\epsilon n$ colour $c$ elements outside $\mathcal S$ is independent from $T$. Let $e$ be such an element.

Now suppose that for all  $r\leq r_0$ we have  $d_{A(\mathcal T^{(r)}_{\ell})}(T)< (1-\epsilon)n$ for all $T$. Notice that in the $\ell$-reduced families $\mathcal T_{\ell}^{(0)}, \mathcal T^{(1)}_{\ell}, \mathcal T^{(2)}_{\ell}, \dots, \mathcal T^{(r_0)}_{\ell}$, no large colours occur (since these families are contained in $\mathcal T$), and all medium colours occur on all independent sets (from the definition of ``reduced family'', if a colour occurs on all $T\in \mathcal T$ then it also occurs on all $T\in \mathcal T'_{\ell}$). An immediate consequence of this is that every  medium and large colour $c$ has $k$-excess zero  in  all $A(\mathcal T^{(r)}_{\ell})$ (for large colours this comes from $d_{A(\mathcal T^{(r)}_{\ell})}(T)< (1-\epsilon)n$ for all $T$).

Apply Lemma~\ref{Lemma_increment} to the reduced families $\mathcal T^{(0)}_{\ell}, \mathcal T^{(1)}_{\ell}, \mathcal T^{(2)}_{\ell}, \dots, \mathcal T^{(r_0)}_{\ell}$ and to every  colour whose $k$-excess is positive in $A(\mathcal T^{(i)}_{\ell})$.
 We claim that for at least one of these applications case (i) of Lemma~\ref{Lemma_increment} has to occur. Indeed, otherwise we would have $e_{\mathcal{T}^{(i+1)}_{\ell}}(c)\leq e_{\mathcal T^{(i)}_{\ell}}(c)-\frac12\kex_{A(\mathcal T^{(i)}_{\ell})}(c)+ \ell n/k$ for all colours $c$ and all $i=0,1,\dots, r_0$ (for positive excess colours this will be from Lemma~\ref{Lemma_increment}. For zero excess colours it is trivial). Summing over all colours this would give $$e(\mathcal T^{(i+1)}_{\ell})\leq e(\mathcal T^{(i)}_{\ell})+\ell n^2/k-\sum_{c\in C(M)}\frac12\kex_{A(\mathcal T^{(i)}_{\ell})}(c)\leq e(\mathcal T^{(i)}_{\ell})+\ell n^2/k-\epsilon^2 n^2/4\leq e(\mathcal T^{(i)}_{\ell})-\epsilon^2 n^2/8$$ 
 This implies  $e(\mathcal T^{(r_0)}_{\ell})<0$ which is a contradiction.

Thus there is some $r\leq r_0$, some colour $c$ of positive $k$-excess in $A(\mathcal T^{(r)}_{\ell})$, and some independent set $T\in \mathcal T^{(r)}_{\ell}$ with at least $d_{A(\mathcal T^{(r)}_{\ell})}(c)+\frac12\kex_{A(\mathcal T^{(r)}_{\ell})}(c)>d_{A(\mathcal T^{(r)}_{\ell})}(c)$ colour $c$ elements $e\not\in E(\mathcal T^{(r)}_{\ell})$ with $T+e$ a rainbow independent set. Since medium/large colours have zero excess, $c$ is small. By the definition of the availability graph, we have $d_{A(\mathcal T^{(r)}_{\ell})}(c)=(1-\epsilon)n-|E_{\mathcal T^{(r)}_{\ell}}(c)|\geq |E_{\mathcal S\setminus \mathcal T^{(r)}_{\ell}}(c)|$ (since there are at most $(1-\epsilon)n$ colour $c$ elements of $\mathcal S$). Thus there is at least one colour $c$ element $e\not\in E(\mathcal S)$ with $T+e$ a rainbow independent set.

Now, having established that P is true, let $r,c,e,T$ be as in P.
Let $S\in \mathcal{S}$ with $T\subseteq S$. 
By Lemma~\ref{Lemma_switching}, there is a family of disjoint rainbow independent sets $\mathcal S^*$ with  $E(\mathcal S^*)=\{e\}\cup( E(\mathcal S)\setminus X)$, for some   $X\subseteq  E(\mathcal S)\setminus E(\mathcal T)$  with $|X|\leq 2^{r+1}$. Let $\mathcal R^*$ be $\mathcal S^{*}$ with all dummy elements deleted.
Since all elements in $E(\mathcal S)\setminus E(\mathcal T)$ were large, we have that small and medium colours are unchanged moving from $\mathcal R$ to $\mathcal R^*$ (other than colour $c$ gaining $e$). So $\mathcal R^*$  satisfies the claim.
\end{proof}
\end{proof}

\section{Concluding remarks}
It is easy to work out the bounds our proof gives: it produces $n-\frac{Cn}{\sqrt{\log n}}$ disjoint rainbow independent sets of size  $n-\frac{Cn}{\sqrt{\log n}}$ (for some fixed large constant $C$). It would be interesting to improve this. 
Additionally, it would be nice  to prove  qualitatively stronger asymptotic versions of the conjecture. 
The following problems are natural goals.
\begin{problem}
Let $B_1, \dots, B_n$ be disjoint bases in a rank $n$ matroid $M$.  Show that there are $(1-o(1))n$ disjoint rainbow bases.
\end{problem}
\begin{problem}
Let $B_1, \dots, B_n$ be disjoint bases in a rank $n$ matroid $M$.
Show that $B_1\cup \dots\cup B_n$ can be decomposed into $(1+o(1))n$ disjoint rainbow independent sets. 
\end{problem}
\begin{problem}
Let $B_1, \dots, B_n$ be disjoint bases in a rank $n$ matroid $M$.
Show that there are $n$ disjoint rainbow independent sets of size $(1-o(1))n$. 
\end{problem}
A solution to any of the above problems, would give a strengthening of Theorem~\ref{Theorem_main}. Theorem~\ref{Theorem_main} may be a good starting point for solving the above problems. It is not uncommon in combinatorics for non-trivial reductions between different kinds of asymptotic results to exist. Moreover, the results in this paper may eventually lead to a solution of Rota's Conjecture for sufficiently large $n$ via \emph{the absorption method}. Absorption is a technique for turning asymptotic results into exact ones. It has recently has found success in rainbow problems related to Rota's Conjecture~\cite{glock2020decompositions}. Now that we have an asymptotic solution to the conjecture in Theorem~\ref{Theorem_main}, it seems promising to try and turn it into a exact solution using absorption.

\subsubsection*{Acknowledgment}
The author would like to thank Matija Bucic, Matthew Kwan, and Benny Sudakov for discussions relating to this paper, particularly related to Lemma~\ref{Lemma_0-excess}. Additionally, he would like to thank Matthew Kwan for reading a draft of the proof and many suggested improvements.

\bibliographystyle{abbrv}
\bibliography{Rota}

\begin{thebibliography}{10}

\bibitem{bucic2018halfway}
M.~Buci{\'c}, M.~Kwan, A.~Pokrovskiy, and B.~Sudakov.
\newblock Halfway to {R}ota's basis conjecture.
\newblock {\em International Mathematics Research Notices}, 2018.

\bibitem{cheung2012computational}
M.~Cheung.
\newblock Computational proof of {R}ota's basis conjecture for matroids of rank
  4.
\newblock {\em Unpublished manuscript. http://educ. jmu.
  edu/duceyje/undergrad/2012/mike. pdf}, 2012.

\bibitem{PolymathProposal}
T.~Chow.
\newblock Rota's basis conjecture: {P}olymath 12.
\newblock {\em
  \url{https://asone.ai/polymath/index.php?title=Rota\%27s_conjecture}}, 2017.

\bibitem{dong2019improved}
S.~Dong and J.~Geelen.
\newblock Improved bounds for {R}ota's {B}asis {C}onjecture.
\newblock {\em Combinatorica}, 39(2):265--272, 2019.

\bibitem{drisko1997number}
A.~A. Drisko.
\newblock On the number of even and odd {L}atin squares of order $p+ 1$.
\newblock {\em advances in mathematics}, 128(1):20--35, 1997.

\bibitem{friedman2019girth}
B.~Friedman and S.~McGuinness.
\newblock Girth conditions and {R}ota's basis conjecture.
\newblock {\em arXiv preprint arXiv:1908.01216}, 2019.

\bibitem{geelen2006rota}
J.~Geelen and P.~J. Humphries.
\newblock Rota’s basis conjecture for paving matroids.
\newblock {\em SIAM Journal on Discrete Mathematics}, 20(4):1042--1045, 2006.

\bibitem{geelen2007rota}
J.~Geelen and K.~Webb.
\newblock On {R}ota's basis conjecture.
\newblock {\em SIAM Journal on Discrete Mathematics}, 21(3):802--804, 2007.

\bibitem{glock2020decompositions}
S.~Glock, D.~K{\"u}hn, R.~Montgomery, and D.~Osthus.
\newblock Decompositions into isomorphic rainbow spanning trees.
\newblock {\em Journal of Combinatorial Theory, Series B}, 2020.

\bibitem{glynn2010conjectures}
D.~G. Glynn.
\newblock The conjectures of {A}lon--{T}arsi and {R}ota in dimension prime
  minus one.
\newblock {\em SIAM Journal on Discrete Mathematics}, 24(2):394--399, 2010.

\bibitem{huang1994relations}
R.~Huang and G.-C. Rota.
\newblock On the relations of various conjectures on {L}atin squares and
  straightening coefficients.
\newblock {\em Discrete Mathematics}, 128(1-3):225--236, 1994.

\bibitem{MattPrivate}
M.~Kwan.
\newblock {\em private communication}, 2020.

\bibitem{PolymathPaper}
D.~H.~J. Polymath.
\newblock {R}ota's basis conjecture online for matroids.
\newblock {\em unpublished manuscript,
  \url{https://www.overleaf.com/8773999gccdbdmfdgkm}}, 2017.

\bibitem{wild1994rota}
M.~Wild.
\newblock On {R}ota's problem about $n$ bases in a rank $n$ matroid.
\newblock {\em Advances in Mathematics}, 108(2):336--345, 1994.

\end{thebibliography}
\end{document}